\definecolor{heavygreen}{RGB}{65,190,0}
\newtheorem{theorem}{Theorem}[section]
\newtheorem{lemma}[theorem]{Lemma}
\newcommand{\Z}{\mathbb{Z}}
\theoremstyle{definition}
\newtheorem{definition}[theorem]{Definition}
\newtheorem{remark}[theorem]{Remark}
\begin{document}
\title{Patterns in the Lattice Homology of Seifert Homology Spheres}
\author{Karthik Seetharaman, William Yue, and Isaac Zhu}
\date{October 22, 2021}

\maketitle

\begin{abstract}
In this paper, we study various homology cobordism invariants for Seifert fibered integral homology 3-spheres derived from Heegaard Floer homology. Our main tool is lattice homology, a combinatorial invariant defined by Ozsv\'ath-Szab\'o and N\'emethi. We reprove the fact that the $d$-invariants of Seifert homology spheres $\Sigma(a_1,a_2,\dots,a_n)$ and $\Sigma(a_1,a_2,\dots,a_n+a_1a_2\cdots a_{n-1})$ are the same using an explicit understanding of the behavior of the numerical semigroup minimally generated by $a_1a_2\cdots a_n/a_i$ for $i\in[1,n]$. We also study the maximal monotone subroots of the lattice homologies, another homology cobordism invariant introduced by Dai and Manolescu. We show that the maximal monotone subroots of the lattice homologies of Seifert homology spheres $\Sigma(a_1,a_2,\dots,a_n)$ and $\Sigma(a_1,a_2,\dots,a_n+2a_1a_2\cdots a_{n-1})$ are the same.

\end{abstract}


\section{Introduction}

The homology cobordism group $\Theta_{\Z}^3$ is a well-studied object in low-dimensional topology, and there have been many attempts in the last few decades to understand its structure. For example, it is known that $\Theta_{\Z}^3$ has a $\mathbb{Z}$-summand (see \cite{froyshov2002equivariant}) and contains $\mathbb{Z}^\infty$ as a subgroup (see \cite{fintushel1990instanton} and \cite{furuta1990homology}); it was recently proven that it also admits a $\mathbb{Z}^\infty$-summand \cite{dai2018infinite}. 

A common tool for studying the homology cobordism group is \textit{Heegaard Floer homology}, an invariant of 3-manifolds defined by Ozsv\'ath and Szab\'o in \cite{ozsvath2004holomorphic}. Heegaard Floer homology is very successful for studying the homology cobordism group, but in general it is very difficult to compute.

However, for certain classes of manifolds, the Heegaard Floer homology is isomorphic to another combinatorially-defined invariant known as \textit{lattice homology}, which is easier to understand and compute. One such class of manifolds for which this is true is \textit{Seifert fibered integral homology spheres}, which are an important class of examples in the study of the homology cobordism group (see \cite{cochran2014homology} and \cite{fintushel1990instanton}). 

In this paper, we study the lattice homologies of Seifert fibered integral homology spheres and related homology cobordism invariants. For these manifolds, work of Can and Karakurt in \cite{can2014calculating} allows us to compute lattice homology using the $\tau$-sequence (refer to Subsection \ref{tau sequence intro}), which is derived by the numerical semigroup minimally generated by $a_1a_2\cdots a_n/a_i$ for $i\in[1,n]$. Though this reformulation is easier to compute, it is still complicated and far from closed-form. There is plenty of interest in computing the lattice homologies of Brieskorn spheres, Seifert fibered integral homology spheres with three fibers, such as in \cite{durusoy2004heegaard}, \cite{saveliev1999floer}, and \cite{tweedy2013heegaard}. 

Specifically, we study the relationship between lattice homologies of families of Seifert fibered integral homology spheres of the form 
\[\Sigma(a_1,a_2,\dots,a_{n-1},a_n+k\alpha),\qquad\text{where}\qquad\alpha:=a_1a_2\cdots a_{n-1}\]
and $k\in \mathbb{Z}$. In particular, we prove periodicity results about homology cobordism invariants within these families.

In the first part of our paper, we focus on the $d$-invariants of these spheres. The $d$-invariant is a numerical invariant derived from Heegaard Floer homology (and thus in our case the lattice homology), of a 3-manifold, and specifies a surjective homomorphism from $\Theta_{\Z}^3$ to $2\mathbb{Z}$ as stated in \cite{hendricks2018connected}. 


We prove the following theorem about $d$-invariants:

\begin{theorem}\label{dinvariant}
The $d$-invariants of the two Seifert fibered integral homology spheres 
\[\Sigma(a_1,a_2,\dots,a_{n-1},a_n) \qquad\text{and}\qquad
\Sigma(a_1,a_2,\dots,a_{n-1},a_n+\alpha)\]
are equal for pairwise relatively prime positive integers $a_1,a_2,\ldots,a_n \geq 2$. Recall that $\alpha:=a_1a_2\cdots a_{n-1}$.
\end{theorem}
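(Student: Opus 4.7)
The plan is to apply the Can--Karakurt reformulation recalled in the introduction, which computes the $d$-invariant of $\Sigma(a_1,\ldots,a_n)$ from the $\tau$-sequence associated to the numerical semigroup $S$ minimally generated by the duals $\beta_i := a_1a_2\cdots a_n/a_i$. The theorem then reduces to a combinatorial comparison between the semigroups $S$ and $S'$ attached to the two Seifert spheres.

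The first step is to compute the new semigroup explicitly. Writing $A = a_1 a_2\cdots a_n$, the modified product factors as $A + \alpha^2 = \alpha(a_n+\alpha)$, so the new $n$-th dual generator is $\alpha = \beta_n$, while for $i<n$ we have $\beta_i' = \beta_i + (\alpha/a_i)\alpha$. Hence both semigroups contain $\alpha$, and each new generator $\beta_i'$ lies in $S$, giving $S'\subseteq S$. Moreover $\beta_i' \equiv \beta_i \pmod{\alpha}$ for every $i$, so $S$ and $S'$ occupy exactly the same residue classes modulo $\alpha$; in each such class both semigroups form an arithmetic progression of common difference $\alpha$, and the starting point of $S'$ exceeds that of $S$ by a nonnegative multiple of $\alpha$.

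The second step is to translate this structural comparison into a statement about the $\tau$-sequence, whose maximum yields the $d$-invariant. Because the $d$-invariant depends only on the maximum of the $\tau$-sequence and not on its full profile, the goal is to show that the upward shift in the starting point within each residue class is exactly compensated by the corresponding change in the normalization entering the $\tau$-sequence, so that the two maxima coincide. The main obstacle lies here: pinpointing the residue class modulo $\alpha$ where the maximum is realized and verifying the cancellation at that class. Since the minimum $m_r$ of $S$ in residue class $r$ depends delicately on the minimal nonnegative representation of $r$ as a sum $\sum c_i\beta_i$, I expect this step to require a careful analysis of those minimal representations, together with bookkeeping showing that the replacement $\beta_i\mapsto\beta_i'$ cannot push the maximizing class elsewhere or change its contribution to the $\tau$-sequence's maximum.
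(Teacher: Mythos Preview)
Your proposal is a plan rather than a proof, and the plan contains a substantive misconception about what needs to be shown. You write that the goal is to prove ``the two maxima coincide'' (presumably the two extrema of the $\tau$-sequences). They do not. In the paper's conventions the $d$-invariant splits as
\[
d(Y)=\tfrac{1}{4}\Bigl(\varepsilon^2 e+e+5-12\sum_i s(b_i,a_i)\Bigr)-2\min_{x\ge 0}\tau_Y(x),
\]
and \emph{both} summands change when $a_n$ is replaced by $a_n+\alpha$. The paper computes, via Dedekind reciprocity, that the first summand changes by exactly $\tfrac{1}{4}\bigl(1-(2t-1)^2\bigr)$ where $2t-1=(n-2)\alpha-\sum_{i<n}\alpha/a_i$, and separately shows, via the grid picture of the $\Delta$-function and the ``critical strip'' argument, that $\min\tau_{Y'}-\min\tau_Y=-t(t-1)/2$. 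These two quantities cancel. So the key idea you are missing is that the semigroup shift you correctly identified produces a nonzero change in the $\tau$-extremum, and a completely separate Dedekind-sum computation on the grading shift is needed to see the cancellation. Your ``normalization entering the $\tau$-sequence'' does not exist in the Can--Karakurt formulation used here; the normalization is external to $\tau$ and must be handled on its own.

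Your semigroup observations (that $\beta_n'=\beta_n=\alpha$, that $\beta_i'\equiv\beta_i\pmod\alpha$, and that $S'\subseteq S$) are correct and are morally the same as the paper's Lemma on how the $\Delta$-grid transforms column by column. But the difficult content is the quantitative statement $\min\tau_{Y'}-\min\tau_Y=-t(t-1)/2$, which the paper obtains by locating the global minimum in a specific ``critical strip'' of width $\alpha$ and tracking how greening/reddening boxes move under $a_n\mapsto a_n+\alpha$; your last paragraph gestures at an analysis of minimal representations in residue classes but does not carry it out, and even if completed it would only reproduce half of the argument.
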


\begin{remark}
The result on $d$-invariants was proven as Proposition 4.1 of \cite{lidman2018note} by interpreting the $+\alpha$ term as surgery on a singular fiber. Our proof is a consequence of understanding the relation between the lattice homologies of the two spaces in question. This involves a more explicit understanding of the $\tau$-sequence and related $\Delta$-function. Although this method is more complicated, it proves helpful in our later results about the maximal monotone subroot (refer to Theorem \ref{monotonesubroot}).
\end{remark}

The second part of the paper is dedicated to the study of the \textit{maximal monotone subroot} of Seifert homology spheres, which was introduced in \cite{dai2019involutive} recently. The maximal monotone subroot is another homology cobordism invariant that is defined for certain plumbed 3-manifolds, including all Seifert homology spheres, which can be derived from their lattice homology. As there is plenty of interest in understanding the full lattice homology, it is natural to try to understand the maximal monotone subroot as well. 

In this paper, we prove the following theorem about the maximal monotone subroots of the lattice homologies of Seifert fibered integral homology spheres.

\begin{theorem}\label{monotonesubroot}
The maximal monotone subroots of the lattice homologies of the two Seifert fibered integral homology spheres 
\[\Sigma(a_1,a_2,\ldots,a_n)\qquad\text{and}\qquad\Sigma(a_1,a_2,\ldots,a_n+2\alpha)\]
are the same for pairwise relatively prime positive integers $a_1,a_2,\ldots,a_n \geq 2$. Recall that $\alpha:=a_1a_2\cdots a_{n-1}$.
\end{theorem}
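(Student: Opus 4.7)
The plan is to attack this through the $\tau$-sequence/$\Delta$-function reformulation of Can--Karakurt, in parallel with the strategy used for Theorem \ref{dinvariant}, but tracking the entire shape of the graded root rather than only its minimum. Write $M:=\Sigma(a_1,\dots,a_n)$ and $M':=\Sigma(a_1,\dots,a_n+2\alpha)$, and let $\Delta$, $\Delta'$ denote their associated $\Delta$-functions. Lattice homology of a Seifert homology sphere is equivalent data to its graded root, which in turn is determined by the sequence of local minima of $\Delta$ together with the maximal values of $\Delta$ between consecutive local minima. The maximal monotone subroot is then extracted combinatorially from this graded root by selecting an appropriate chain of local minima whose associated ``caps'' decrease monotonically. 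Thus it suffices to show that the portions of $\Delta$ and $\Delta'$ relevant to the monotone subroot match up (after a shift of indexing) both at the level of local extrema and at the level of their heights.

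The first step is to carry out, in the more delicate setting of the $+2\alpha$ shift, the semigroup analysis developed for Theorem \ref{dinvariant}. The numerical semigroup generated by $\alpha_i = \alpha a_n/a_i$ changes in a controlled way when $a_n$ is replaced by $a_n+2\alpha$: each generator $\alpha_i$ for $i<n$ increases by $2\alpha^2/a_i$, while $\alpha_n$ is unchanged. The plan is to write down an explicit index correspondence $N \leftrightarrow N'$ between the domains of $\Delta$ and $\Delta'$ in the critical range (near the origin, where the graded root lives) and to show that under this correspondence the numerical semigroup data giving the $\tau$-sequence transforms predictably, so that $\Delta'(N') - \Delta(N)$ is constant on the relevant intervals.

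The second step is to leverage this correspondence to match extremal structures. The monotone subroot only records local minima of $\Delta$ together with a chain of ``increasing'' caps; one has to verify two compatible statements: (a) the combinatorial positions of the local minima relevant to the monotone subroot are in bijection under the index correspondence, and (b) the cap heights between consecutive local minima agree, so that the monotonicity conditions selecting the subroot are preserved. Step (a) should follow from the fact that the semigroup generators contribute to $\Delta$ in the same relative pattern on both sides, while (b) should follow from the constancy of $\Delta'(N')-\Delta(N)$ on each interval between critical indices.

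The main obstacle is explaining the factor of $2$: with a single $+\alpha$ shift, as in Theorem \ref{dinvariant}, the semigroup data lines up well enough at the global minimum to conclude equality of $d$-invariants, but the intermediate local extrema need not match, because a parity issue in the correspondence $N \leftrightarrow N'$ shifts caps by half-steps and can destroy monotonicity of the selected subroot. Doubling to $+2\alpha$ restores integrality in this correspondence and synchronises the intermediate critical indices. The hard part of the proof will therefore be the careful bookkeeping showing that with the $+2\alpha$ shift every local minimum of $\Delta$ in the monotone range has a unique counterpart in $\Delta'$ at the correct height, and that no spurious new local minima or maxima are introduced in between; this requires a much finer analysis of the $\tau$-sequence than was needed for the $d$-invariant.
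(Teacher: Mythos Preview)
Your outline has the right broad shape---match up $\Delta$-values under an index shift and deduce that the graded-root data agrees---but it is missing the concrete mechanism that makes the argument work, and in one place the description is simply wrong.

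First, a correctness issue: the graded root is governed by the local extrema of $\tau$, not of $\Delta$. Your sentence ``determined by the sequence of local minima of $\Delta$ together with the maximal values of $\Delta$ between consecutive local minima'' is not right; $\Delta$ is the discrete derivative of $\tau$, and it is $\tau_{\mathrm{ex}}$ that encodes the root. This matters because matching $\Delta$-values on an interval only tells you about the \emph{shape} of $\tau$ there up to an additive constant, so you need separate control on where the global minimum sits and on the grading shift.

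Second, the substantive gap: you never identify \emph{which} interval to work on, and that is the heart of the proof. The paper's argument is not ``$\Delta'(N')-\Delta(N)$ is constant on the relevant intervals'' with a vague correspondence; it is an exact equality of $\Delta$-values on two explicit, equal-length, centrally symmetric windows. Concretely, using the critical-strip analysis (Theorem~\ref{glob min crit strip}) one knows the first and last global minima of $\tau_Y$ lie in $((t-1)a_n-\alpha,\,ta_n)$, so by Lemma~\ref{maximal monotone subroot global mins} the $\Delta$-values on that window determine the monotone subroot. For $Y''$ one takes the window $((t-1)a_n'',\,ta_n''-\alpha)$. The key computation is the two-step identity
\[
\Delta\bigl((t-1)a_n-\alpha+i\bigr)=\Delta''\bigl((t-1)a_n''-\alpha+i\bigr)=\Delta''\bigl((t-1)a_n''+i\bigr),
\]
where the first equality is the grid-transformation invariance of the critical strip (applied twice) and the second is Lemma~\ref{delta in columns}, valid because $0<i<a_n+\alpha<a_n''$ keeps $(t-1)a_n''+i$ off the multiples of $a_n''$. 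This is the actual reason the shift must be $2\alpha$: only then is $a_n''-\alpha=a_n+\alpha$, so the two windows have the same length and the column-shift in Lemma~\ref{delta in columns} avoids a blue border. Your ``parity/half-step'' heuristic does not capture this; with a single $+\alpha$ there is no centrally symmetric window of the correct length on the $Y'$ side to compare against.

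Finally, matching the $\Delta$-windows gives equality of monotone subroots only up to grading; the absolute grading is pinned down by Theorem~\ref{dinvariant}, which you should invoke explicitly rather than leaving implicit.
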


\begin{remark}
Certain cases of this theorem follow from more general work in \cite{hendricks2020surgery}, which establishes a surgery formula for involutive Heegaard Floer homology. A consequence of Proposition 22.9 in that paper is that the maximal monotone subroots of the lattice homologies of $\Sigma(p,q,pqn\pm 1)$ and $\Sigma(p,q,pq(n+2)\pm 1)$ are the same. In Theorem \ref{monotonesubroot}, we generalize this result both to an arbitrary number of fibers and for $a_n$ to be an arbitrary residue modulo $\alpha$ (it is not restricted to just $\pm 1\pmod{\alpha}$) using our understanding of the lattice homology through the $\tau$-sequence, $\Delta$-function, and the numerical semigroup minimally generated by $a_1a_2\cdots a_n/a_i$ for $i\in[1,n]$.
\end{remark}

\begin{remark}\label{importance d-invariant and maximal monoton subroot}
Note that if an integral homology sphere is homology cobordant to $S^3$, then it bounds an integral homology ball. Since $S^3$ has a $d$-invariant of $0$ and a trivial maximal monotone subroot (that is, just a single upwards-pointing infinite stem), 
in order for a Seifert fibered integral homology sphere to bound an integral homology 4-ball, its $d$-invariant must be $0$ and its maximal monotone subroot must be trivial. 
\end{remark}

\begin{remark}
It has been shown in Theorem 2.2 of \cite{manolescu2018homology} 
that the classes $[\Sigma(2,3,6k-1)]$ are linearly independent in $\Theta_{\mathbb Z}^3$ using Yang--Mills Theory. This is in constrast to the $d$-invariant, which we've shown in Theorem \ref{dinvariant} to be unable to distinguish any of these classes, and the maximal monotone subroot, which we've shown in Theorem \ref{monotonesubroot} to at most be able to distinguish the parity of $k$. Recall that Proposition 22.9 in \cite{hendricks2020surgery} demonstrated this already for this particular class of 3-fibered Seifert fibered integral homology spheres where $a_n\equiv\pm 1\pmod{\alpha}$.
\end{remark}

\paragraph{Organization.} In Section~\ref{prelims}, we define Seifert fibered integral homology spheres, lattice homology, and review a numerical method of  computing the lattice homology of these 3-manifolds. In Section~\ref{properties of tau}, we analyze and prove several properties of the $\tau$-sequence, the related $\Delta$-function, and the numerical semigroup minimally generated by $a_1a_2\cdots a_n/a_i$ for $i\in[1,n]$. We also introduce a useful pictorial representation of the $\Delta$-function. Finally, in Section~\ref{results d invariants}, we prove Theorem \ref{dinvariant}, and in Section~\ref{results maximal monotone subroots} we prove Theorem \ref{monotonesubroot}.

\paragraph{Acknowledgements.} We would like to thank our mentor Dr. Irving Dai for his guidance throughout the project, as well as the MIT PRIMES program under which this research was conducted.

\section{Preliminaries}\label{prelims}

In this section, we recall the definitions of Seifert fibered integral homology spheres (which we will call \textit{Seifert homology spheres} from now on) and lattice homology and then review a method of numerically computing the lattice homology of Seifert homology spheres.

\subsection{Seifert fibered integral homology spheres}\label{sf integral homology spheres}

\begin{definition}
    Let $a_1,a_2,\ldots,a_n$ be pairwise coprime positive integers. Solve the equation 
    \begin{equation}\label{homologyspherecondition}
    \sum_{i=1}^n \frac{b_i}{a_i} \left(\prod_{j=1}^n a_j \right) = -1 - e_0a_1a_2 \cdots a_n
    \end{equation}
    for $(e_0,b_1,b_2,\ldots,b_n)$, where we restrict $1\le b_i<a_i$ for all $i\in[1,n]$. Taking this equation modulo $a_i$ gives
    \begin{equation}\label{computing b}
    \frac{a_1a_2\cdots a_n b_i}{a_i}\equiv -1\pmod{a_i},
    \end{equation}
    so there is a unique solution for each $b_i$. Note that $e_0<0$. Then the \textit{Seifert homology sphere} $\Sigma(a_1,a_2,\ldots,a_n)$ is defined as the generalized Seifert manifold 
    \[M(e_0,(a_1,b_1),(a_2,b_2),\ldots, (a_n,b_n))\]
    over $S^2$, with surgery diagram shown in Figure~\ref{fig:surgery diagram}. Note that equation (\ref{homologyspherecondition}) ensures that this manifold is an integral homology sphere.
\end{definition}


\begin{figure}
    \centering
    \includegraphics[width=7cm]{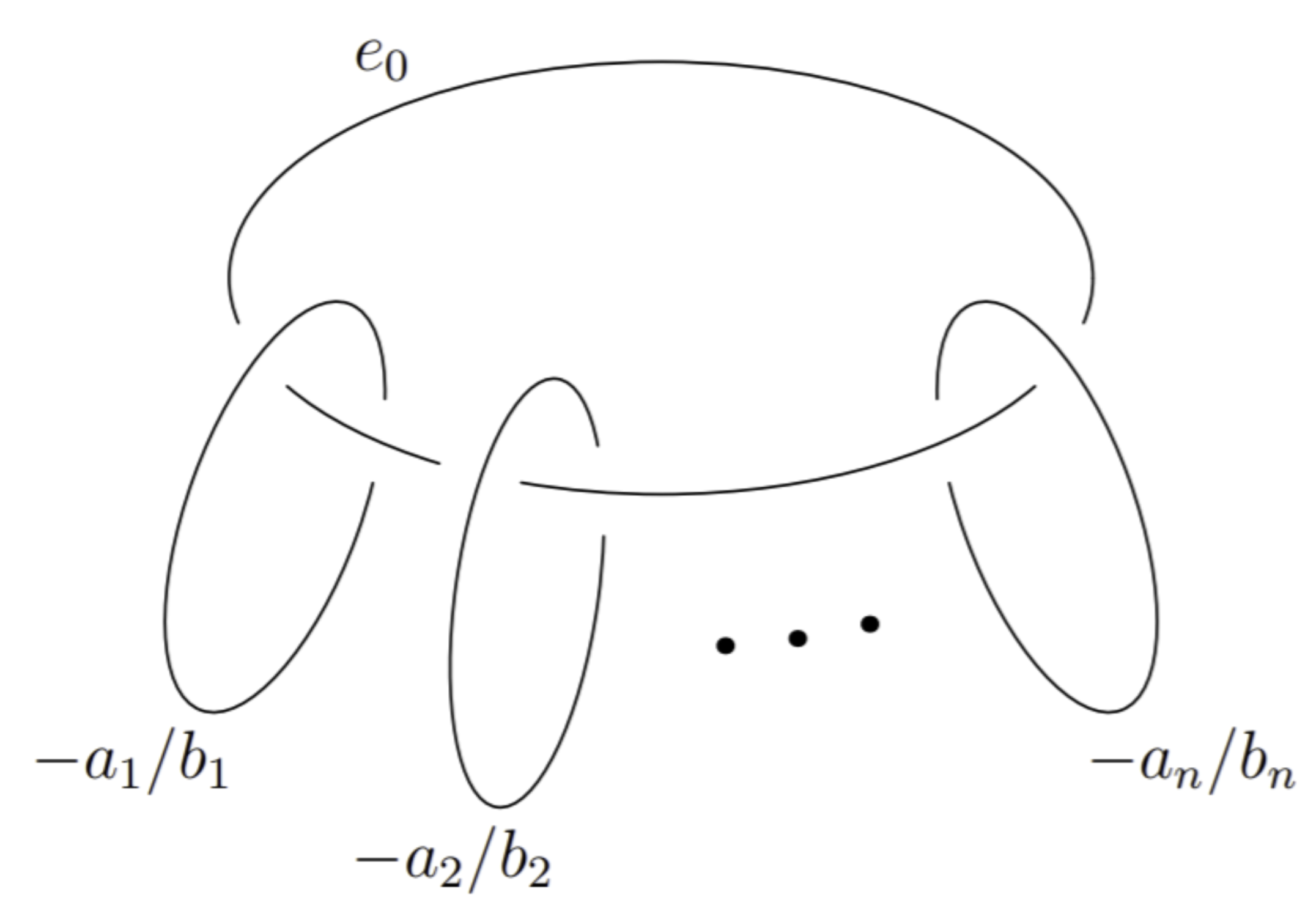}
    \caption{Rational surgery diagram for the Seifert homology sphere $\Sigma(a_1,a_2,\dots,a_n)$}
    \label{fig:surgery diagram}
\end{figure}

\subsection{Lattice homology}

This paper investigates the \textit{lattice homologies} of these Seifert homology spheres. In this section, we define the lattice homology, which is an invariant defined for plumbed 3-manifolds together with a chosen equivalence class of characteristic vector (refer to Definitions \ref{charvector} and \ref{charvectorequivalence}). Though the exact details of the definition of lattice homology will be unnecessary for understanding the rest of the paper, we include them for completeness. 

\begin{definition}
A \textit{plumbed 3-manifold} is a manifold with a surgery diagram consisting of integral surgeries on unknots linked together in a tree.
\end{definition}

To define lattice homology, we use the \textit{plumbing graph} of a plumbed 3-manifold $Y$, which is a decorated tree that represents the integral surgery diagram of $Y$, created by replacing each unknot with a vertex and connecting an edge between vertices if their respective unknots are linked. For the particular case of Seifert homology spheres $\Sigma(a_1,a_2,\dots,a_n)$, we can obtain this by replacing each rational surgery in Figure~\ref{fig:surgery diagram} with a chain of unknots with coefficients determined by partial fraction decomposition; the integral surgery diagram and plumbing graph of $\Sigma(a_1,a_2,\dots,a_n)$ is shown in Figure~\ref{fig:integral surgery diagram and plumbing graph}. 


The definition of lattice homology requires that the manifold admits a negative-definite plumbing graph, and for the rest of this section, we will assume that all plumbing graphs are negative-definite. 

\begin{figure}
    \centering
    \includegraphics[width=14cm]{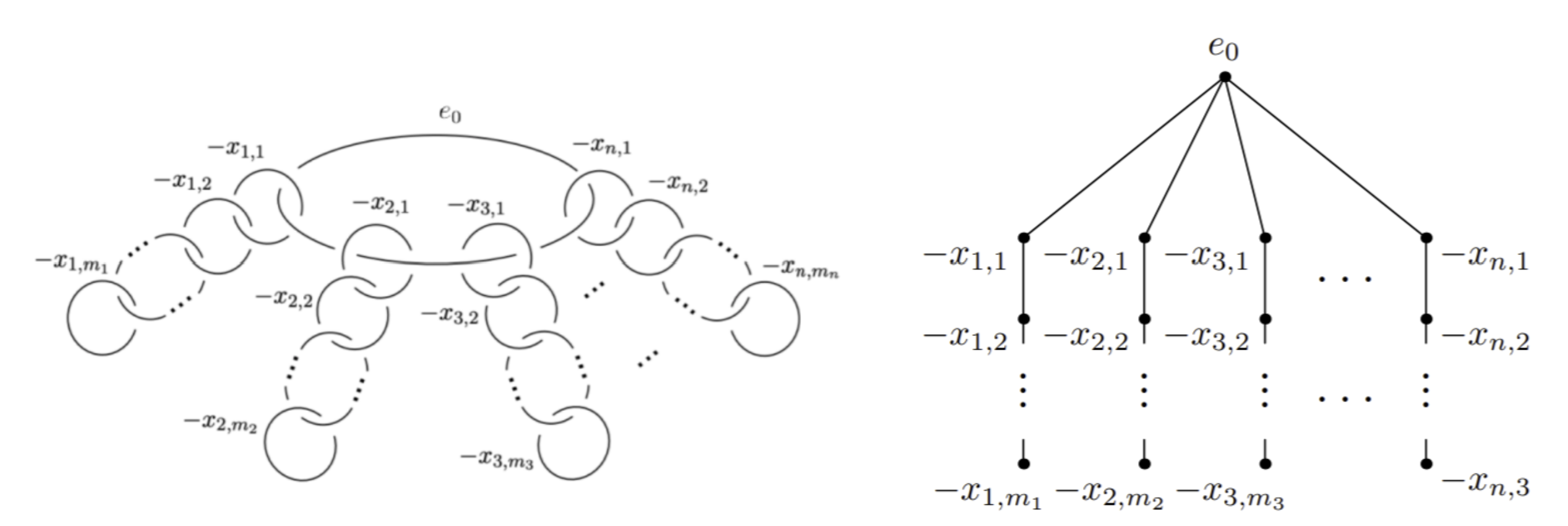}
    \caption{Integral surgery diagram (on left) and associated plumbing graph (on right) of Seifert homology sphere $\Sigma(a_1,a_2,\cdots,a_n)$, where the partial fraction decomposition of $\frac{a_i}{b_i}$ is $[x_{i,1},x_{i,2},\dots,x_{i,m_i}]$ for each $i\in [1,n]$.}
    \label{fig:integral surgery diagram and plumbing graph}
\end{figure}

\begin{definition}
The \textit{intersection form} $(-,-)$ of a given plumbing graph $\Gamma$ of a plumbed 3-manifold $Y$ is defined on the lattice $L_{\Gamma}=\text{Span}_{\mathbb{Z}}(v_1,v_2,\ldots,v_n)$ formally spanned by the vertices $v_1,v_2,\ldots, v_n$ of $\Gamma$. It is given by
\[(v_i,v_j)= \begin{cases} 
      0 & i \neq j \text{ and } v_i,v_j \text{ not adjacent} \\
      1 & i \neq j \text{ and } v_i,v_j \text{ adjacent} \\
      \text{decoration of } v_i & i=j
   \end{cases},
\]
which is then extended bilinearly to all of $L_\Gamma\otimes\mathbb{Q}$. Note that this is just the adjacency matrix of $\Gamma$, except vertices have a self-adjacency equal to their decoration. 
\end{definition}

\begin{definition}\label{charvector}
Let $k$ be an element of the rational lattice $L_{\Gamma} \otimes \mathbb{Q}$. We say $k$ is a \textit{characteristic vector} if 
\[(k,x) \equiv (x,x) \pmod{2}\]
for all $x \in L_{\Gamma}$. The set of characteristic vectors is denoted as $\text{Char}_{\Gamma}$.
\end{definition}

\begin{definition}\label{charvectorequivalence}
Given any characteristic vector $k\in\text{Char}_{\Gamma}$ and element $y\in L_\Gamma$, the vector $k+2y$ is also characteristic. This action of $L_\Gamma$ partitions the set of characteristic vectors into equivalence classes, and we denote the equivalence class of $k$ by $[k]$.
\end{definition}

We can now define lattice homology as in~\cite{nemethi2008lattice}, which is an invariant of a plumbed 3-manifold along with a chosen equivalence class of characteristic vector. We will do so using sublevel sets. Let $\Gamma$ be any plumbing graph and fix $k \in \text{Char}_{\Gamma}$. We define a weight function 
\[w(x) = \frac{(x,x)+(k,x)}{2}\]
and extend it to $d$-dimensional cubes of side-length one by setting 
\[w(x_d) = \min_{x \text{ a vertex of } x_d} w(x),\]
where $x_d$ is an $d$-dimensional cube. For any $n$, the \textit{sublevel set} $S_n$ is defined as 
\[S_n = \bigcup_{w(y) \geq n} y,\]
where $y$ is any $d$-dimensional cube with side length one. We then have the following:  

\begin{definition}
Fix a plumbing graph $\Gamma$ for a plumbed 3-manifold $Y$, and for each $n \in \mathbb{Z}$, draw a vertex for each connected component in the sublevel set $S_n$ at height $-2n$ on the page. Note that $S_{n+1} \subseteq S_n$ for all $n \in \mathbb{Z}$, so for each $n$ define the map $T_n : S_{n+1} \rightarrow S_n$ that sends each connected component in $S_{n+1}$ to the one it is contained in inside $S_n$. Record the results of this map by drawing lines between corresponding vertices in the diagram. This gives a \textit{graded root}, as shown in Figure~\ref{fig:lattice homology example}. The lattice homology is this graded root with all heights shifted by the quantity $-\frac{(k,k) + |\Gamma|}{4}$; note that the final height of each vertex is called its \textit{grading}.
\end{definition}

Note that since the sublevel sets consist of the $d$-dimensional cubes with side length one contained within some expanding ellipsoid, there will eventually be only one connected component for all sufficiently negative $n$. This corresponds to the infinite tower on top of the lattice homology. On the other hand, when $n$ is larger than the maximum of the weight function on the intersection form, $S_n$ contains no connected components, which corresponds to the lattice homology ending when it is sufficiently low.

\begin{figure}
    \centering
    \includegraphics[width=10cm]{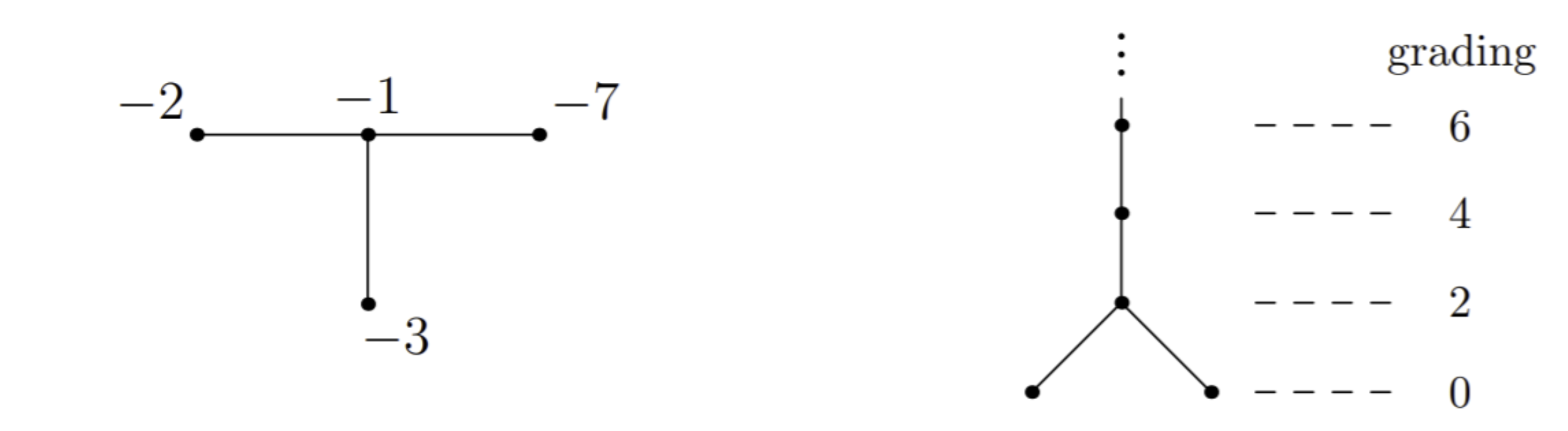}
    \caption{The plumbing graph of $\Sigma(2,3,7)$ on the left with its graded root lattice homology on the right.}
    \label{fig:lattice homology example}
\end{figure}

\begin{remark}
Remarkably, it can be shown that, up to equivalence classes of characteristic vectors, the lattice homology does not depend on the particular plumbing diagram chosen for a plumbed 3-manifold; that is, it is an invariant of plumbed 3-manifolds themselves. Note, however, that while lattice homology does not depend on the specific characteristic vector in a chosen equivalence class, it does vary if a different equivalence class of characteristic vectors is chosen.
\end{remark}

As stated in \cite{ozsvath2003floer}, every Seifert homology sphere has a negative-definite plumbing graph, and thus a lattice homology. Furthermore, because they are integral homology spheres, it turns out that these manifolds only admit one equivalence class of characteristic vector anyway, so each only has one lattice homology. It is also known that, for certain plumbed 3-manifolds with negative-definite plumbing (such as Seifert homology spheres), the lattice homology is isomorphic to the Heegaard-Floer homology \cite{ozsvath2003floer}, a long-studied and important invariant whose definition is outside the scope of this paper.

We can now define the $d$-invariant of a 3-manifold, as in~\cite{hom2019heegaard}.

\begin{definition}
The $d$\textit{-invariant} of a 3-manifold is $-1$ times the grading of the lowest vertex of its lattice homology. 
\end{definition}

\begin{remark}
The factor of $-1$ is simply for convention reasons.
\end{remark}

\subsection{Constructing the lattice homology of Seifert homology spheres using the $\tau$-sequence}\label{tau sequence intro}

As it turns out, for Seifert homology spheres, the lattice homology and $d$-invariant can be understood through the behavior of a particular sequence known as the $\tau$-sequence.

\begin{definition}\label{tau}
Consider a Seifert homology sphere $\Sigma(a_1,a_2,\ldots,a_n)$. The $\tau$\textit{-sequence} is defined by the recurrence 
\[\tau(x+1)=\tau(x)+1+|e_0|x - \sum_{i=1}^n \left\lceil{\frac{xb_i}{a_i}}\right\rceil,\]
where, as before, 
\[\sum_{i=1}^n \frac{b_i}{a_i} \left(\prod_{j=1}^n a_j \right) = -1 - e_0a_1a_2 \ldots a_n.\]
\end{definition}

\begin{definition}
We define the difference term in the above recurrence as the $\Delta$\textit{-function}
\[\Delta(x)=1+|e_0|x-\sum_{i=1}^n\left\lceil\frac{xb_i}{a_i}\right\rceil.\]
Therefore, 
\[\tau(x)=\sum_{i=0}^{x-1}\Delta(i).\]
\end{definition}

Now, we review a method of computing the lattice homology of Seifert homology spheres using this $\tau$-sequence. 

\begin{definition}
We say that $\tau(M)$ is a \textit{local maximum} of $\tau$ if there exists integers $\alpha,\beta$ with $\alpha<M<\beta$ such that $\tau(\alpha)<\tau(M)>\tau(\beta)$, and $\tau$ is monotone nondecreasing on the interval $[\alpha,M]$ and monotone nonincreasing on the interval $[M,\beta]$. \textit{Local minimum} values $\tau(m)$ are defined analogously. Together, these are called the \textit{local extrema} of $\tau$.
\end{definition}

\begin{definition}
Consider the sequence of all local extrema of the $\tau$-sequence. However, sometimes the $\tau$-sequence remains constant at a local extrema for multiple consecutive inputs. We choose to count these consecutive repeated local extrema as a single value. For example, the sequence $[0,1,1,1,-1,-1,1,1,1,0]$ of all local extrema (including consecutive repeated ones) of $\tau$ is collapsed into $[0,1,-1,1,0]$. This collapsed sequence, denoted $\tau_{\text{ex}}$, and is called the $\tau$\textit{-extrema sequence}.
\end{definition}

We can associate $\tau_{\text{ex}}$ with a graded root, which, after a grading shift, gives the lattice homology of the Seifert homology sphere. For any graded root, we can associated it to a sequence through the following procedure: given any graded root, we consider a path that wraps around the tree, starting to the left of the infinite stem and ending to its right. As an example, in Figure \ref{fig:graded root path}, the path is drawn in blue.

\begin{figure}
    \centering
    \includegraphics[width=11cm]{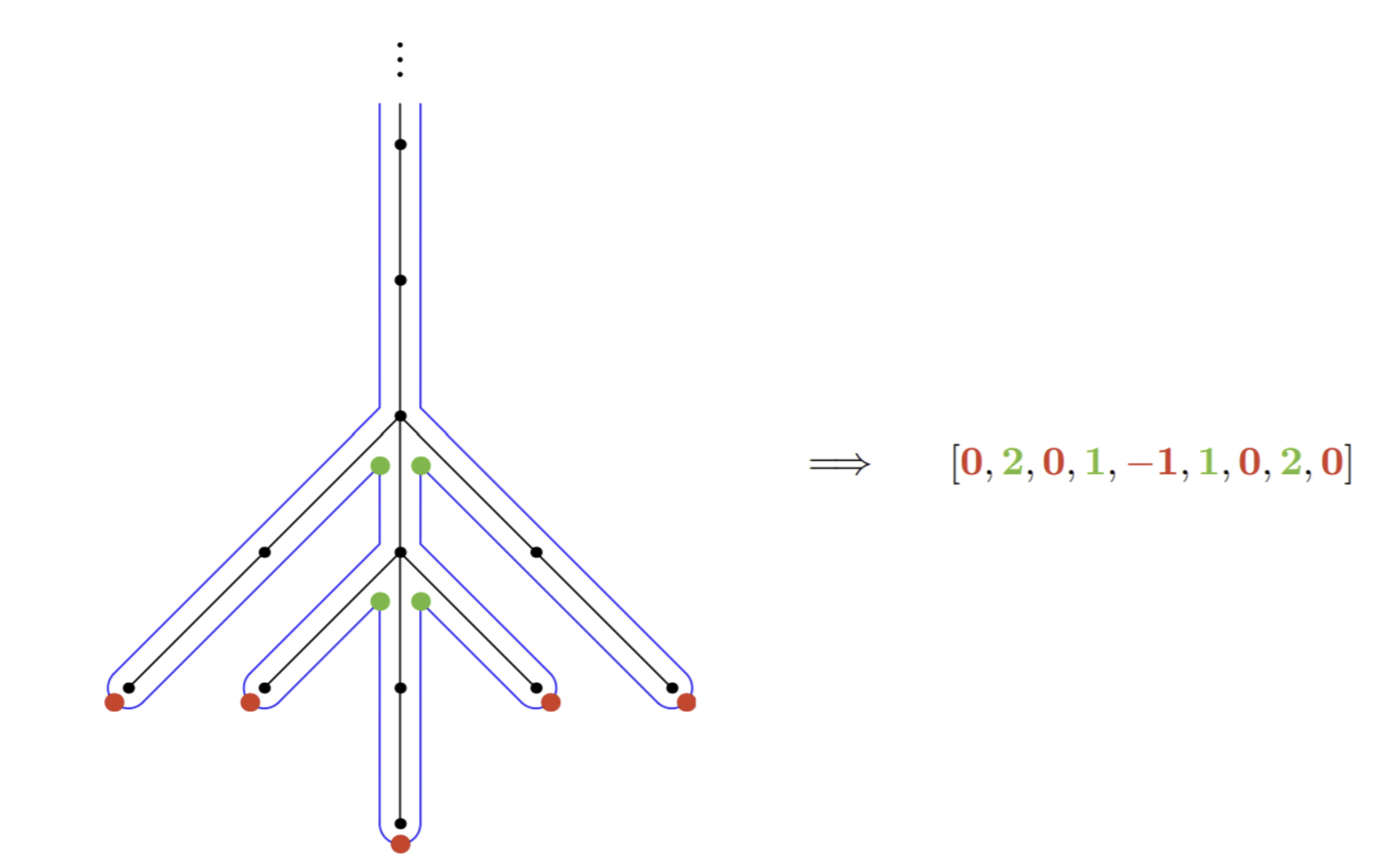}
    \caption{Example of extracting sequence from graded root, assuming the grading of the leftmost vertex is $0$.}
    \label{fig:graded root path}
\end{figure}

Then, we mark every local minimum of this blue path with a red point and every local maximum of this blue path with a green point; more formally, red points are where the blue path changes from moving downwards to moving upwards, and green points are where the blue path changes from moving upwards to moving downwards. Recording the gradings of the vertices of the graded root where these local extrema occur in order along the blue path gives the sequence associated to this graded root. In the example in Figure \ref{fig:graded root path}, we get the sequence $[0,2,0,1,-1,1,0,2,0]$. In general, this procedure gives some sequence
\[[m_1,M_1,m_2,M_2,\dots,m_{n-1},M_{n-1},m_n]\]
of numbers such that $M_i>\max\{m_i,m_{i+1}\}$ for $i=1,2,\dots,n-1$. Conversely, any such sequence $s$ also uniquely corresponds to a graded root, which we denote by $R_s$. 

Since $2\tau_{\text{ex}}$ is a sequence of alternating local minima and maxima, it has an associated graded root $R_{2\tau_{\text{ex}}}$. It turns out that this graded root, after a global grading shift, matches the lattice homology of the Seifert homology sphere. Note that the factor of 2 ensures that all gradings are the same parity, as in the lattice homology.

\begin{theorem}[\cite{nemethi2005ozsvath}]
The lattice homology of the Seifert homology sphere $\Sigma(a_1,a_2,\dots,a_n)$ is isomorphic to $R_{2\tau_{\text{ex}}}$ after applying a global grading shift $-\frac{K^2+|\Gamma|}{4}$, where $K$ is the canonical cohomology class, which can be viewed as a specially selected characteristic vector.
\end{theorem}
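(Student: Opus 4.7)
The plan is to reduce the high-dimensional sublevel-set computation defining the lattice homology to a one-dimensional problem parametrized by the coefficient of the central vertex $v_0$ of the plumbing graph (refer to Figure~\ref{fig:integral surgery diagram and plumbing graph}). Fix the canonical characteristic vector $K$, determined on vertices by $(K,v_j)=-2-(v_j,v_j)$, and consider the weight function $w(L)=\frac{(L,L)+(K,L)}{2}$ on $L_\Gamma$. The crux is to exploit the fact that once the $v_0$-coordinate of a lattice vector $L$ is fixed, the contributions of the $n$ legs decouple and each behaves as a strictly convex integer quadratic on its chain.

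For each integer $x\geq 0$, I would restrict attention to the affine slice $\mathcal{L}_x\subset L_\Gamma$ of vectors whose $v_0$-coefficient equals $x$, and show that $w|_{\mathcal{L}_x}$ attains a unique integer minimum $L_x^*$ obtained by solving a decoupled integer-linear optimization on each leg. A direct computation then shows that $w(L_x^*)$ agrees (up to the constant shift prescribed by the theorem) with the $\tau$-sequence of Definition~\ref{tau}: the self-intersection of $v_0$ produces the $|e_0|x$ term, while the ceiling functions $\lceil xb_i/a_i\rceil$ arise as the optimal integer rounding on the $i$th leg, whose weighting is governed by the continued fraction expansion of $a_i/b_i$. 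This identifies $2\tau$ with the sequence of weights along the "central axis" $\{L_x^*\}_{x\geq 0}$.

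The main obstacle is the second part: upgrading this numerical identification to a topological equivalence of graded roots. Specifically, I would argue that each sublevel set $S_n$ deformation-retracts (in the cubical sense) onto its intersection with the central axis, so that the connected-component structure of $S_n$ is controlled entirely by the set $\{x:\tau(x)\geq n\}$. The strategy is to fix $x$ and show that the portion of $S_n$ lying over $\mathcal{L}_x$ is the sublevel set of a strictly convex integer quadratic and is therefore either empty or a contractible union of cubes; a cube over $x$ and a cube over $x+1$ are then joined in $S_n$ precisely when $\min(\tau(x),\tau(x+1))\geq n$. This is exactly the property needed for Seifert plumbings to behave as almost-rational in the sense of N\'emethi, and it is the only nontrivial input of the argument; handling the interaction between convexity on the continuous relaxation and the integer lattice structure is where the technical work lives.

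With the reduction in place, the merging pattern of components of $\{x:\tau(x)\geq n\}$ as $n$ decreases is governed precisely by the local extrema of $\tau$, so the graded root built from sublevel sets is isomorphic to $R_{2\tau_{\mathrm{ex}}}$, with the factor of $2$ absorbing the half-integrality of $w$ and ensuring all gradings have consistent parity. Finally, applying the global grading shift $-\frac{K^2+|\Gamma|}{4}$ normalizes the infinite stem to the conventional position and completes the identification with the lattice homology, finishing the proof.
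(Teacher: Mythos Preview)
The paper does not supply its own proof of this theorem; it is simply quoted from N\'emethi \cite{nemethi2005ozsvath} as background. So there is nothing in the paper to compare your argument against directly.

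That said, your outline is broadly the right shape: reduce the high-dimensional sublevel-set picture to a one-parameter problem along the central vertex, and then read off the graded root from the local extrema of the resulting sequence. This is indeed the mechanism that makes Seifert plumbings ``almost rational'' in N\'emethi's sense. Two comments are worth making.

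First, the step you correctly flag as the crux---that each $S_n$ has the same connected-component structure as $\{x:\tau(x)\ge n\}$---does not quite follow from the slicing you describe. The minimizers $L_x^*$ and $L_{x+1}^*$ need not differ by a single generator of $L_\Gamma$, so there is no canonical edge-cube joining them, and the ``retraction onto the central axis'' is not literally a retraction onto $\{L_x^*\}$. N\'emethi's actual argument proceeds instead via a \emph{computational sequence}: a specific path $x(0),x(1),\dots$ in $L_\Gamma$ where consecutive terms differ by a single vertex class, chosen by a Laufer-type algorithm, along which the weight function realizes all the topology of the sublevel sets. Your convexity heuristic is the right intuition for why such a path exists, but the precise construction is what does the work.

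Second, the identification of the minimized weight with the explicit ceiling formula for $\tau$ is not in \cite{nemethi2005ozsvath} itself; that computation is the content of Can--Karakurt \cite{can2014calculating}, which the paper also cites. So your sketch is really a fusion of the two references rather than a proof of the cited theorem alone.
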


\begin{remark}
It turns out that the graded root $R_{2\tau_{\text{ex}}}$ is symmetric (which is required since the lattice homology is necessarily symmetric by definition). This is true because of Property 2 of Theorem~\ref{deltadesc}.
\end{remark}

\section{Properties of the $\tau$-Sequence and $\Delta$-Function}\label{properties of tau}

In this section we will develop a pictorial representation of the $\Delta$-function, by constructing a table of the integers with $\alpha:=a_1a_2 \cdots a_{n-1}$ columns. This picture will be critical to our later analysis. Figure \ref{fig:semigroup picture} depicts the function for $(a_1,a_2,a_3)=(3,7,29)$, where the width of the grid is $3 \cdot 7 = 21$.

\begin{figure}[h]
    \centering
    \includegraphics[width=10cm]{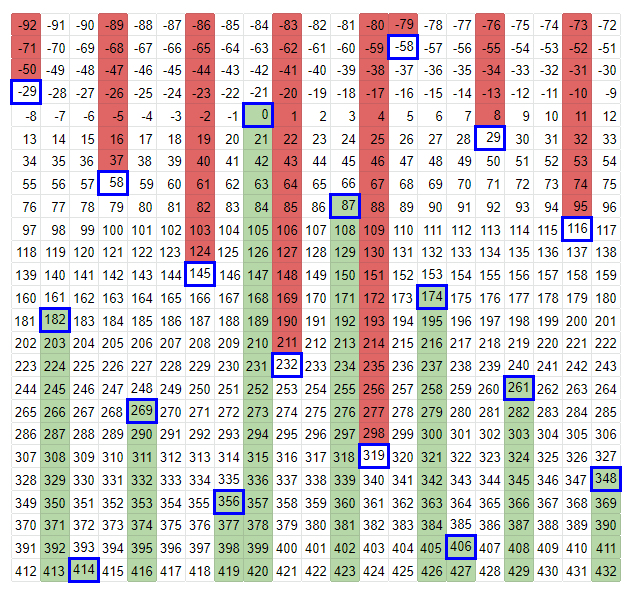}
    \caption{This diagram represents the $\Delta$-function pictorally, and it displays information for $(p,q,r)=(3,7,29)$ with some arbitrarily chosen bounds. As we will soon see, the blue bordered boxes are multiples of $a_n$, the green boxes have $\Delta(x)=0$, the red boxes have $\Delta(x)=-1$, and the white boxes have $\Delta(x)=0$. As some random examples, $\Delta(-44)=-1$, $\Delta(145)=1$, and $\Delta(339)=1$.}
    \label{fig:semigroup picture}
\end{figure}

Let $\Delta: \mathbb{Z} \rightarrow \mathbb{Z}$ be given by
\[\Delta(x)=1+|e_0|x-\sum_{i=1}^n\left\lceil\frac{xb_i}{a_i}\right\rceil\]
for all $x\in\mathbb{Z}$. With this consideration of domain, we have the following natural extension of Theorem 4.1 in \cite{can2014calculating}, by Can and Karakurt:
\begin{theorem}\label{deltadesc}
Let $a_1,a_2,\dots,a_n$ be pairwise relatively prime positive integers, and define 
\[\Delta(x)=1+|e_0|x-\sum_{i=1}^n\left\lceil\frac{xb_i}{a_i}\right\rceil,\]
as before. Define the constant
\[N_0=a_1a_2\cdots a_n\left((n-2)-\sum_{i=1}^n\frac{1}{a_i}\right)\in\mathbb{Z}_{>0}.\]
Then, the following properties hold: 
\begin{enumerate}
    \item $\Delta(x)\ge 0$ for all $x>N_0$.
    \item $\Delta(x)=-\Delta(N_0-x)$ for \textnormal{all} $x\in\mathbb{Z}$.
    \item For \textnormal{all} $x\in\mathbb{Z}$, one has $\Delta(x)\ge 1$ if and only if $x$ is an element of the numerical semigroup $G$ minimally generated by $a_1a_2\cdots a_n/a_i$ for $i\in[1,n]$ (defined below in Definition \ref{numsemigroup}).
    \item If $x\in G$ and $\displaystyle x=a_1a_2\cdots a_n\sum_{i=1}^n\frac{x_i}{a_i}$, then $\displaystyle\Delta(x)=1+\sum_{i=1}^n\left\lfloor\frac{x_i}{a_i} \right\rfloor$.
\end{enumerate}
\end{theorem}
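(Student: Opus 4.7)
The plan is to deduce all four properties from the non-negative version (Theorem 4.1 of Can--Karakurt \cite{can2014calculating}) together with the symmetry in Property 2, which is the only ingredient genuinely new to this extension. Accordingly, the main work is to prove Property 2; Properties 1, 3, and 4 then follow quickly, with only Property 3 for $x<0$ requiring a brief additional argument.

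First I would establish Property 2 by evaluating the sum
\[S(x) := \sum_{i=1}^n \left\lceil \frac{(N_0-x)b_i}{a_i} \right\rceil + \sum_{i=1}^n \left\lceil \frac{xb_i}{a_i} \right\rceil\]
in closed form. The key observation is that $N_0 b_i/a_i$ has fractional part exactly $1/a_i$: expanding
\[N_0 = (n-2)\prod_{k} a_k - \sum_{j} \prod_{k \neq j} a_k,\]
the only summand of $N_0 b_i / a_i$ that fails to be an integer is $-b_i \prod_{k \neq i} a_k / a_i$, and by equation (\ref{computing b}) one has $b_i \prod_{k \neq i} a_k \equiv -1 \pmod{a_i}$. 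Writing $N_0 b_i = M_i a_i + 1$ and $x b_i = q_i a_i + r_i$ with $M_i, q_i \in \mathbb{Z}$ and $0 \leq r_i < a_i$, a short case analysis on whether $r_i = 0$ (using $a_i \geq 2$) gives
\[\left\lceil \frac{(N_0-x)b_i}{a_i}\right\rceil + \left\lceil \frac{xb_i}{a_i}\right\rceil = M_i + 1\]
for each $i$. Summing over $i$ and using equation (\ref{homologyspherecondition}) to rewrite $\sum_i b_i/a_i = |e_0| - 1/(a_1\cdots a_n)$ together with $N_0/(a_1 \cdots a_n) = (n-2) - \sum_i 1/a_i$, one computes $\sum_i M_i = |e_0|N_0 - n + 2$, so $S(x) = |e_0|N_0 + 2$. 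Substituting into the definitions of $\Delta(x)$ and $\Delta(N_0 - x)$ yields $\Delta(x) + \Delta(N_0 - x) = 0$.

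For Property 1 and for Properties 3 and 4 restricted to $x \geq 0$, I would simply cite Theorem 4.1 of \cite{can2014calculating}, since those statements agree verbatim with the Can--Karakurt theorem once the domain is restricted. The only piece not handled by that citation is Property 3 for $x < 0$: the numerical semigroup $G \subseteq \mathbb{Z}_{\geq 0}$ contains no negative integers, so it suffices to show $\Delta(x) \leq 0$ for such $x$. But by Property 2, $\Delta(x) = -\Delta(N_0 - x)$, and since $N_0 - x > N_0$, Property 1 yields $\Delta(N_0 - x) \geq 0$, hence $\Delta(x) \leq 0$, as needed.

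The hardest part is the bookkeeping in Property 2 --- particularly the case distinction on whether $a_i \mid x b_i$ (equivalently, by $\gcd(a_i,b_i)=1$, whether $a_i \mid x$), which corresponds exactly to the multiples-of-singular-fiber cells visible in the pictorial representation in Figure~\ref{fig:semigroup picture}. Once this case analysis is handled cell-by-cell and the identity $\sum_i M_i = |e_0| N_0 - n + 2$ is verified, everything else is a straightforward substitution and the reduction to Can--Karakurt is immediate.
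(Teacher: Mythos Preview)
Your proposal is correct. In fact, the paper does not give a proof of this theorem at all: it simply introduces the statement as ``the following natural extension of Theorem~4.1 in \cite{can2014calculating}'' and moves on. So you have supplied precisely the argument that the paper leaves implicit.

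Your organization matches the paper's framing exactly: Properties~1, 3 (for $x\ge 0$), and~4 are the content of Can--Karakurt's Theorem~4.1, and the only genuinely new ingredients in passing to all of $\mathbb{Z}$ are the antisymmetry (Property~2) and the one-line deduction of Property~3 for $x<0$. Your computation for Property~2 is correct: the identification of the fractional part of $N_0 b_i/a_i$ as $1/a_i$ via equation~(\ref{computing b}), the case split on $r_i=0$ versus $r_i\ge 1$ giving $\lceil (N_0-x)b_i/a_i\rceil + \lceil xb_i/a_i\rceil = M_i+1$ in both cases, and the evaluation $\sum_i M_i = |e_0|N_0 - n + 2$ from equations~(\ref{homologyspherecondition}) and the definition of $N_0$ all check out. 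The deduction of Property~3 for $x<0$ from Properties~1 and~2 is also correct.

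One small stylistic note: the remark about Figure~\ref{fig:semigroup picture} and ``multiples-of-singular-fiber cells'' is a bit loose, since the case $a_i\mid x$ is not quite the same phenomenon as the blue-bordered multiples of $a_n$ in the grid picture (which singles out $i=n$). But this is commentary, not part of the argument, and does not affect correctness.
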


\begin{definition}\label{numsemigroup}
The numerical semigroup $G$ minimally generated by $a_1a_2\cdots a_n/a_i$ for $i\in[1,n]$ is the set of all $x\in\mathbb{Z}$ that can be expressed as a nonnegative integer linear combination of $a_1a_2\cdots a_n/a_i$. That is,
\[G=\left\{x\in\mathbb{Z} : x=a_1a_2\cdots a_n\sum_{i=1}^n\frac{x_i}{a_i}\text{ for some }x_i\in\mathbb{Z}_{\ge 0}\right\}.\]
\end{definition}

\begin{remark}
Note that since $\Delta(x)\ge 0$ for all $x>N_0$, and Property 2 of Theorem~\ref{deltadesc} gives that $\Delta(x)\le 0$ for all $x<0$, the $\tau$-sequence has no local extrema outside of the interval $[0,N_0]$, so this interval is all that matters for constructing the lattice homology.
\end{remark}

Let $H$ denote the numerical semigroup minimally generated by the numbers $\alpha/a_i$ for $1 \le i \le n-1$.

\begin{lemma}\label{primitive semigroup}
If $s \in H$ but $s-\alpha \not\in H$, then $\Delta(sa_n)=1$.
\end{lemma}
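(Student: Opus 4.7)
The plan is to apply Property 4 of Theorem~\ref{deltadesc} directly, after extracting from the hypothesis a representation of $sa_n$ as an element of $G$ whose coefficients are all strictly less than the corresponding $a_i$.

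First I would unpack what $s \in H$ means: there exist nonnegative integers $c_1,\dots,c_{n-1}$ with $s = \sum_{i=1}^{n-1} c_i \cdot \alpha/a_i$. The key observation is that the hypothesis $s - \alpha \notin H$ forces every such representation to satisfy $c_i < a_i$ for all $i \in [1,n-1]$. Indeed, suppose for contradiction some representation has $c_j \ge a_j$. Then since $a_j \cdot \alpha/a_j = \alpha$, we can subtract $\alpha$ to obtain
\[s - \alpha = (c_j - a_j) \cdot \frac{\alpha}{a_j} + \sum_{i \ne j} c_i \cdot \frac{\alpha}{a_i},\]
which exhibits $s - \alpha$ as an element of $H$, contradicting the hypothesis.

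Next, I would multiply the representation of $s$ by $a_n$ to get
\[sa_n = \sum_{i=1}^{n-1} c_i \cdot \frac{a_1 a_2 \cdots a_n}{a_i} = a_1 a_2 \cdots a_n \sum_{i=1}^n \frac{x_i}{a_i},\]
where $x_i = c_i$ for $i \in [1,n-1]$ and $x_n = 0$. This shows $sa_n \in G$, and by the previous paragraph, $0 \le x_i < a_i$ for every $i$ (including $i = n$ trivially). Therefore $\lfloor x_i/a_i \rfloor = 0$ for all $i$.

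Finally, I would invoke Property 4 of Theorem~\ref{deltadesc}, which then gives
\[\Delta(sa_n) = 1 + \sum_{i=1}^n \left\lfloor \frac{x_i}{a_i} \right\rfloor = 1,\]
as desired. The only nontrivial step here is the contrapositive argument in the first paragraph establishing that $c_i < a_i$; the rest is an essentially formal application of the semigroup formula. I do not anticipate a serious obstacle, since the ``swap $a_i$ copies of $\alpha/a_i$ for a single $\alpha$'' manipulation is exactly the mechanism relating membership in $H$ with and without the shift by $\alpha$.
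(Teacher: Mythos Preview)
Your proposal is correct and follows essentially the same argument as the paper: both take a representation of $s$ in $H$, observe that $s-\alpha\notin H$ forces each coefficient $c_i<a_i$ (else one could swap $a_i$ copies of $\alpha/a_i$ for $\alpha$), multiply through by $a_n$ to get a representation of $sa_n$ in $G$, and then apply Property~4 of Theorem~\ref{deltadesc} to conclude $\Delta(sa_n)=1$. Your version is slightly more explicit about setting $x_n=0$, but there is no substantive difference.
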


\begin{proof}
Since $s \in H$, there are nonnegative integers $x_i$ such that
\[sa_n = \left( \alpha \sum_{i=1}^{n-1} \frac{x_i}{a_i} \right) a_n =a_1a_2 \cdots a_n \sum_{i=1}^{n-1}\frac{x_i}{a_i}.\]Notice that we must have $x_i<a_i$ for each $i$: otherwise, we would have $s-\alpha \in H$. So by Theorem~\ref{deltadesc} we have 
\[\Delta(sa_n) = 1+\sum_{i=1}^n\left\lfloor\frac{x_i}{a_i} \right\rfloor=1.\]
\end{proof}

We now describe the relationship between the $\Delta$s of vertically adjacent cells in the grid (note that $x$ and $x+\alpha$ are vertically adjacent):

\begin{lemma}\label{delta in columns}
We have that 
\[\Delta(x+\alpha)=\begin{cases}
\Delta(x)+1 & \text{if } a_n\mid x+\alpha\\
\Delta(x) & \text{otherwise}
\end{cases}.\]
\end{lemma}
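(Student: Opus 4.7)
The plan is to compute $\Delta(x+\alpha) - \Delta(x)$ directly from the formula
\[\Delta(x)=1+|e_0|x-\sum_{i=1}^n\left\lceil\frac{xb_i}{a_i}\right\rceil\]
and read off the two cases. The key observation is that for every index $i<n$ we have $a_i \mid \alpha$, so the corresponding ceiling difference is integer-valued and contributes a constant independent of $x$; only the $i=n$ term is sensitive to the residue of $x$ modulo $a_n$, and that is what produces the dichotomy.

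First I would dispatch the indices $i \in [1,n-1]$. Since $a_i \mid \alpha$, the quantity $\alpha b_i/a_i$ is an integer, so
\[\left\lceil\frac{(x+\alpha)b_i}{a_i}\right\rceil - \left\lceil\frac{xb_i}{a_i}\right\rceil = \frac{\alpha b_i}{a_i},\]
which is a fixed constant. Dividing the homology-sphere condition (\ref{homologyspherecondition}) through by $a_n$ and rearranging, these $n-1$ terms together with $|e_0|\alpha$ collapse to the positive integer $c := (1+b_n\alpha)/a_n$, which is indeed an integer by (\ref{computing b}). Thus the entire computation reduces to understanding the single $i=n$ ceiling difference, and
\[\Delta(x+\alpha)-\Delta(x) = c - \left(\left\lceil\frac{(x+\alpha)b_n}{a_n}\right\rceil - \left\lceil\frac{xb_n}{a_n}\right\rceil\right).\]

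For the $i=n$ term I would write $\alpha b_n/a_n = c - 1/a_n$, which gives
\[\left\lceil\frac{(x+\alpha)b_n}{a_n}\right\rceil - \left\lceil\frac{xb_n}{a_n}\right\rceil = c + \left\lceil\frac{xb_n}{a_n} - \frac{1}{a_n}\right\rceil - \left\lceil\frac{xb_n}{a_n}\right\rceil.\]
Setting $y := xb_n/a_n$, the question reduces to comparing $\lceil y - 1/a_n\rceil$ with $\lceil y\rceil$. Because $a_n y = xb_n$ is an integer, the fractional part $\{y\}$ is always a multiple of $1/a_n$, and a one-line check shows that the ceiling drops by exactly $1$ precisely when $\{y\} = 1/a_n$, i.e., when $xb_n \equiv 1 \pmod{a_n}$. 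Using $b_n\alpha \equiv -1 \pmod{a_n}$ from (\ref{computing b}), this residue condition is equivalent to $x \equiv -\alpha \pmod{a_n}$, i.e., $a_n \mid x+\alpha$.

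Assembling the pieces gives $\Delta(x+\alpha) - \Delta(x) = c - c = 0$ when $a_n \nmid x+\alpha$ and $c - (c-1) = 1$ when $a_n \mid x+\alpha$, as claimed. I expect the main obstacle to be the ceiling-function bookkeeping in the last step: one must track carefully that the ``drop by one'' is triggered in exactly one residue class mod $a_n$ and in no other, which is ultimately forced by the congruence $b_n^{-1} \equiv -\alpha \pmod{a_n}$ supplied by (\ref{computing b}). Everything else is a straightforward application of the defining relations.
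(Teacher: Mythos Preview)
Your argument is correct and follows essentially the same route as the paper: both compute $\Delta(x+\alpha)-\Delta(x)$ directly, use $a_i\mid\alpha$ for $i<n$ to collapse those terms, invoke the relation~(\ref{homologyspherecondition}) to obtain the constant $(1+\alpha b_n)/a_n$, and then reduce everything to the single-residue case $xb_n\equiv 1\pmod{a_n}$ via the congruence $\alpha b_n\equiv -1\pmod{a_n}$. The only cosmetic difference is that the paper packages the ceiling bookkeeping using the auxiliary function $\psi(x)=\lceil x\rceil-x$, whereas you analyze the fractional part of $y=xb_n/a_n$ directly; the content is identical.
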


\begin{proof}
We apply the definition of $\Delta$ to get 
\begin{align*}
\Delta&(x+\alpha)-\Delta(x) = |e_0|\alpha - \sum_{i=1}^n \left(\left\lceil{\frac{(x+\alpha)b_i}{a_i}}\right\rceil - \left\lceil{\frac{xb_i}{a_i}}\right\rceil\right)\\
&=-e_0\alpha-\sum_{i=1}^{n-1}\frac{\alpha b_i}{a_i}-\left\lceil\frac{(x+\alpha)b_n}{a_n}\right\rceil+\left\lceil\frac{xb_n}{a_n}\right\rceil.
\end{align*}
Equation (\ref{homologyspherecondition}) gives
\[e_0\alpha = -\frac{1}{a_n} - \sum_{i=1}^{n-1} \frac{\alpha b_i}{a_i} - \frac{\alpha b_n}{a_n},\]
so 
\[\Delta(x+\alpha)-\Delta(x) = \frac{1+\alpha b_n}{a_n} - \left\lceil{\frac{(x+\alpha)b_n}{a_n}}\right\rceil + \left\lceil{\frac{xb_n}{a_n}}\right\rceil.\]
Now, define the function $\psi(x):=\lceil x\rceil - x$. Plugging this in gives
\[\Delta(x+\alpha)-\Delta(x)=\frac{1}{a_n}-\psi\left(\frac{(x+\alpha)b_n}{a_n}\right)+\psi\left(\frac{xb_n}{a_n}\right).\]
By equation (\ref{computing b}), we have that $\alpha b_n\equiv -1\pmod{a_n}$. Since $\psi(x+k)=\psi(x)$ for all $k\in\mathbb{Z}$, we have
\[\Delta(x+\alpha)-\Delta(x)=\frac{1}{a_n}+\psi\left(\frac{xb_n}{a_n}\right)-\psi\left(\frac{xb_n-1}{a_n}\right).\]
This quantity equals $1$ if and only if $xb_n-1\equiv 0\pmod{a_n}$; in all other cases, it equals $0$. However, $xb_n-1\equiv 0\pmod{a_n}$ is equivalent to $x\equiv -\alpha\pmod{a_n}$. Therefore, $\Delta(x+\alpha)-\Delta(x)$ equals 1 if $a_n\mid x+\alpha$ and 0 otherwise, as desired.
\end{proof}

Recalling that $x+\alpha$ is the number directly below $x$ in the grid, this lemma states that as we move downwards in a column, $\Delta$ stays the same, unless we reach a multiple of $a_n$, in which case $\Delta$ increases by $1$. Combined with Lemma \ref{primitive semigroup}, we now have the following complete pictorial description of the $\Delta$-function:

\begin{itemize}
    \item We give every multiple of $a_n$ a blue border. If a blue bordered box $sa_n$ has $s \in H$ and $s-\alpha \not\in H$, then we call that box \textit{primitive}. Lemma \ref{primitive semigroup} says the $\Delta$-function on primitive boxes evaluates to 1.
    \item Note there is exactly one primitive blue box in each column: $sa_n$ being in any particular column is equivalent to $s$ being a particular residue modulo $\alpha$. There is exactly one minimal element of $H$ within any residue class modulo $\alpha$.
    \item As we move downwards in a column, the value of $\Delta$ stays the same, unless we reach a multiple of $a_n$, in which case $\Delta$ increases by $1$.
\end{itemize}

Figure \ref{fig:3729 complete delta picture} is an image of the grid for $(a_1,a_2,a_3)=(3,7,29)$. The actual integers in each cell have been removed. The blue borders mark multiples of $a_3=29$, and the primitive blue boxes are those at the top of the light green columns.
\begin{figure}[h]
    \centering
    \includegraphics[width=3.5cm]{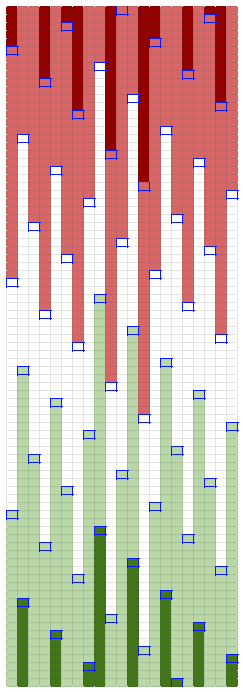}
    \caption{Zoomed-out image of $\Delta$-function for $\Sigma(3,7,29)$ without labels in boxes. White cells mean $\Delta=0$, light green means $\Delta=1$, dark green means $\Delta=2$, light red means $\Delta=-1$, and dark red means $\Delta=-2$.}
    \label{fig:3729 complete delta picture}
\end{figure}

\noindent Finally, we introduce the notation 
\[\chi(x):= \tau(x)-\tau(x-\alpha) = \Delta(x-\alpha)+ \cdots + \Delta(x-1)\]
for all $x \in \mathbb{Z}$.
\begin{lemma}\label{nondecreasing chi}
For all $x$, we have 
\[\chi(x+1)-\chi(x)= \begin{cases}
    1 &\text{if }x\text{ is a multiple of }a_n\\
    0 &\text{otherwise}
\end{cases}.\]
\end{lemma}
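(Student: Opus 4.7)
The plan is to reduce the claim to a single application of Lemma \ref{delta in columns}. Starting from the definition
\[\chi(x) = \Delta(x-\alpha) + \Delta(x-\alpha+1) + \cdots + \Delta(x-1),\]
I would write out $\chi(x+1)$ as the analogous sum shifted by one, and observe that almost every term cancels in the difference. Concretely, all summands with index in $\{x-\alpha+1,\ldots,x-1\}$ appear in both $\chi(x)$ and $\chi(x+1)$, so
\[\chi(x+1)-\chi(x) = \Delta(x) - \Delta(x-\alpha).\]

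Next, I would apply Lemma \ref{delta in columns} with the substitution $x \mapsto x-\alpha$, which gives $\Delta((x-\alpha)+\alpha) - \Delta(x-\alpha) = \Delta(x) - \Delta(x-\alpha)$ equals $1$ exactly when $a_n \mid (x-\alpha)+\alpha = x$, and equals $0$ otherwise. Matching this against the two cases of $\chi(x+1)-\chi(x)$ in the statement closes the argument.

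There isn't really a hard step here: once one notices the telescoping, the rest is just quoting the previously established column rule for $\Delta$. The only thing to be careful about is the index shift when invoking Lemma \ref{delta in columns} (applying it to $x-\alpha$ rather than to $x$ directly), and making sure the endpoints of the two summations defining $\chi(x)$ and $\chi(x+1)$ are handled correctly, so that exactly the terms $\Delta(x-\alpha)$ and $\Delta(x)$ survive in the difference.
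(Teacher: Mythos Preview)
Your proof is correct and matches the paper's own argument exactly: telescope the two sums to obtain $\chi(x+1)-\chi(x)=\Delta(x)-\Delta(x-\alpha)$, then invoke Lemma~\ref{delta in columns} (applied at $x-\alpha$) to conclude. There is nothing to add.
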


\begin{proof}
Note that $\chi(x+1)-\chi(x) = \Delta(x)-\Delta(x-\alpha)$. The result follows from Lemma \ref{delta in columns}.
\end{proof}

For the rest of this section, we work with a given Seifert homology sphere $Y=\Sigma(a_1, a_2, \dots , a_n)$, we define \[t=\frac{1}{2}\left((n-2)\alpha - \sum_{i=1}^{n-1} \frac{\alpha}{a_i}+1\right).\]
Also recall
\[N_0 =(n-2)a_1a_2 \cdots a_n -\sum_{i=0}^{n}\frac{a_1a_2 \cdots a_n}{a_i}= (2t-1)a_n - \alpha.\]
Denote $a_n'=a_n+\alpha$, and $Y'=\Sigma(a_1,a_2, \dots , a_n')$.

\begin{definition}
We say that a blue bordered box $sa_n$ is \textit{greening} if $\Delta(sa_n)>0$ (equivalently, $s \in H$). Otherwise, if $s \not\in H$ and $\Delta(sa_n) \le 0$, we say that the blue bordered box $sa_n$ is \textit{reddening}. This is because these boxes are where the values of the $\Delta$-function change color within the column.
\end{definition}

Notice that for any cell $x$, at least one of $\#(\text{greening borders nonstrictly above }x)$ and $\#(\text{reddening borders strictly below }x)$ is zero (where above and below here refer to cells in the same column as $x$), and that the former minus the latter is equal to $\Delta(x)$.

Next, we wish to prove an important result about $\chi(0)$, but to do this we must first show something important about the numerical semigroup generated by $\frac{a_1a_2\cdots a_n}{a_i}$ for a list of pairwise relatively prime positive integers $a_1,a_2,\dots,a_n$:

\begin{lemma}\label{largest nonelement}
For pairwise relatively prime positive integers $a_1,a_2,\dots,a_n$ where we define $P:=a_1a_2\cdots a_n$, the largest positive integral nonelement of the numerical semigroup $G$ minimally generated by $\frac{P}{a_i}$ for $i\in[1,n]$ is precisely
\[(n-1)P-\sum_{i=1}^{n}\frac{P}{a_i}.\]
\end{lemma}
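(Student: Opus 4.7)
The plan is a direct Chinese Remainder Theorem argument in the style of Ap\'ery sets for the numerical semigroup $G$. I would write $N := (n-1)P - \sum_{i=1}^n P/a_i$ and exploit the key observation that for each $i$ and each $j \ne i$, the integer $P/a_j$ is divisible by $a_i$, while $\gcd(P/a_i, a_i) = 1$. So for any integer $m$, there is a unique $c_i(m) \in \{0, 1, \dots, a_i - 1\}$ with $c_i(m) \cdot (P/a_i) \equiv m \pmod{a_i}$, and the sum $S(m) := \sum_{i=1}^n c_i(m)(P/a_i)$ satisfies $S(m) \equiv m \pmod{P}$ by CRT, together with the uniform bound
\[0 \le S(m) \le \sum_{i=1}^n (a_i - 1)\frac{P}{a_i} = nP - \sum_{i=1}^n \frac{P}{a_i} = N + P.\]

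First I would show $N \notin G$. Assume for contradiction that $N = \sum_i d_i (P/a_i)$ with $d_i \in \mathbb{Z}_{\ge 0}$, and reduce modulo $a_i$. Since $a_i$ divides both $(n-1)P$ and every $P/a_j$ with $j \ne i$, one obtains $d_i (P/a_i) \equiv -P/a_i \pmod{a_i}$; invertibility of $P/a_i$ modulo $a_i$ then forces $d_i \equiv -1 \pmod{a_i}$, hence $d_i \ge a_i - 1$ for every $i$. Summing yields $\sum_i d_i(P/a_i) \ge nP - \sum_i P/a_i = N + P > N$, contradicting the assumption.

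Next, for any $m > N$, I would form $S(m)$ as above. The bound gives $S(m) \le N + P < m + P$, and since $S(m) \equiv m \pmod{P}$ with $S(m) \ge 0$, this forces $S(m) \le m$, so $m - S(m) = kP$ for some $k \in \mathbb{Z}_{\ge 0}$. Absorbing $kP = (k a_1)(P/a_1)$ into the first coefficient then gives
\[m = (c_1(m) + k a_1)(P/a_1) + \sum_{i=2}^n c_i(m)(P/a_i) \in G.\]
The only delicate point will be the tightness of the bound $S(m) \le N + P$, which is exactly one period above $N$: this is precisely what makes $N$ the sharp threshold, and it is why nothing weaker than the hypothesis $m > N$ suffices to guarantee that $S(m)$ does not overshoot $m$ by a full period $P$.
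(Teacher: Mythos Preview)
Your argument is correct. Both the congruence claim $S(m)\equiv m\pmod{P}$ via CRT, the lower bound $d_i\ge a_i-1$ forcing $N\notin G$, and the inequality $S(m)\le N+P<m+P$ forcing $S(m)\le m$ whenever $m>N$, all go through as written.

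The route is genuinely different from the paper's. The paper singles out the generator $\alpha=P/a_n$ and computes the Frobenius number via the Ap\'ery set of $G$ with respect to $\alpha$: it arranges $\mathbb{Z}_{\ge 0}$ in a grid with $\alpha$ columns, identifies the minimal element of $G$ in each residue class mod $\alpha$ as $a_n\sum_{i=1}^{n-1}c_i\alpha/a_i$ with $0\le c_i<a_i$, and then takes the maximum over all columns minus $\alpha$. Your argument is symmetric in the $a_i$ and avoids distinguishing any one generator: you directly build, for each $m$, an explicit representation $S(m)\in G$ in the same residue class mod $P$, and control its size. What the paper's approach buys is that the grid picture and the auxiliary semigroup $H$ are reused heavily in the rest of Section~3 and in the proof of Theorem~\ref{three}; what your approach buys is a shorter, self-contained proof of this lemma that does not depend on that machinery.
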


\begin{proof}
As before, let $\alpha=\frac{P}{a_n}$ and consider the semigroup $H$ minimally generated by the set $\{\frac{\alpha}{a_i}\}_{i\in[1,n-1]}$, and let $a_nH$ be the set when all elements are multiplied by $a_n$. Consider arranging the natural numbers in an infinite table, with columns corresponding to residues modulo $\alpha$, similar to the grid in Section \ref{properties of tau}. Color elements of $H$ purple. Any term in the table on or below a purple term in the same column will be in $G$. This is because all elements of $G$ can be expressed as some element of $a_nH$ plus some nonnegative multiple of $\alpha$, which corresponds to shifting downwards in the same column. Thus, it suffices to find the largest purple term with no purple terms above it, and subtract $\alpha$ from it. 

Every purple term can be written in the form $\displaystyle a_n\sum_{i=1}^{n-1} \frac{c_i\alpha}{a_i}$ for some nonnegative integers $c_1,c_2,\ldots,c_{n-1}$. Now, note that two purple terms $\displaystyle a_n\sum_{i=1}^{n-1} \frac{c_i\alpha}{a_i}$ and $\displaystyle a_n \sum_{i=1}^{n-1} \frac{d_i\alpha}{a_i}$ are equal if and only if 
\[P\sum_{i=1}^{n-1} \frac{c_i}{a_i} \equiv P\sum_{i=1}^{n-1} \frac{d_i}{a_i} \pmod{\alpha}.\]

Since both sides are equivalent modulo $a_1$, we must have $\frac{c_1P}{a_1} \equiv \frac{d_1P}{a_1} \pmod{a_1}$, so we have $c_1 \equiv d_1 \pmod{a_1}$. Repeating this for all $i\in[1,n-1]$, we get $c_i \equiv d_i \pmod{a_i}$ for all $1 \leq i \leq n-1$. 

Therefore, in order to get the maximum purple term $\displaystyle a_n\sum_{i=1}^{n-1} \frac{c_i\alpha}{a_i}$ with no purple terms above it, we need to set set $c_i = a_i-1$ for all $1 \leq i \leq n-1$. The maximum purple term comes out to 
\[a_n\alpha(n-1)-a_n\sum_{i=1}^{n-1} \frac{\alpha}{a_i} = (n-1)P - \sum_{i=1}^{n-1} \frac{P}{a_i},\]
and subtracting $\alpha=\frac{P}{a_n}$ gives the desired result.
\end{proof}

\begin{remark}
When $n=2$, we recognize this as the well-known Chicken McNugget Theorem: the largest positive integer that cannot be expressed as a nonnegative integral linear combination of two relatively prime positive integers $p,q$ is indeed $pq-p-q$.
\end{remark}

\begin{lemma}\label{chi of zero}
We have that $\chi(0)=-t$.
\end{lemma}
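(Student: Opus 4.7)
The plan is to exploit the reflection symmetry $\Delta(x) = -\Delta(N_0-x)$ from Property~2 of Theorem~\ref{deltadesc} by evaluating $\chi$ at an index for which the range of summation maps, under the reflection $y \mapsto N_0 - y$, exactly onto the range $\{-\alpha,\ldots,-1\}$ that defines $\chi(0)$. The telescoping identity in Lemma~\ref{nondecreasing chi} will then give a second, independent expression for the same difference $\chi(\cdot) - \chi(0)$, and comparing the two will leave a linear equation determining $\chi(0)$.

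Concretely, I would work at the index $x = N_0 + \alpha + 1$. On the one hand, by definition $\chi(N_0+\alpha+1) = \sum_{y=N_0+1}^{N_0+\alpha}\Delta(y)$, and substituting $k = y - N_0$ together with $\Delta(N_0+k) = -\Delta(-k)$ collapses this sum into $-\sum_{k=1}^{\alpha}\Delta(-k) = -\chi(0)$. On the other hand, telescoping Lemma~\ref{nondecreasing chi} from $0$ up to $N_0+\alpha+1$ gives
\[\chi(N_0+\alpha+1) - \chi(0) = \#\{x : 0 \le x \le N_0 + \alpha,\ a_n \mid x\}.\]
A short unpacking of the definitions of $t$ and $N_0$ shows $N_0 + \alpha = (2t-1)a_n$, so the multiples of $a_n$ in question are precisely $0, a_n, 2a_n, \ldots, (2t-1)a_n$, contributing $2t$ to the count. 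Equating $-\chi(0) = \chi(0) + 2t$ yields $\chi(0) = -t$.

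The only genuine obstacle is spotting the correct evaluation point $x = N_0 + \alpha + 1$: once it is chosen, the symmetry half and the telescoping half of the argument are each a one-line calculation using results already established earlier in the paper (Property~2 of Theorem~\ref{deltadesc} and Lemma~\ref{nondecreasing chi} respectively), and the identity $N_0 + \alpha = (2t-1)a_n$ is immediate from the definitions.
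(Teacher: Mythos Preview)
Your proof is correct and takes a genuinely different route from the paper's. The paper interprets $\chi(0)=\sum_{x=-\alpha}^{-1}\Delta(x)$ combinatorially: each summand is (minus) the number of nonnegative reddening boxes in its column, so $-\chi(0)$ is the total count of reddening boxes among $0,a_n,\ldots,(2t-1)a_n$. It then invokes Lemma~\ref{largest nonelement} to see that the largest nonelement of $H$ is $2t-1$, and uses the symmetry $\Delta(sa_n)+\Delta((2t-1-s)a_n)=1$ (derived from Property~2 of Theorem~\ref{deltadesc} together with Lemma~\ref{delta in columns}) to pair the $2t$ boxes and conclude exactly half are reddening. By contrast, your argument is purely algebraic: the reflection $\chi(N_0+\alpha+1)=-\chi(0)$ comes straight from Property~2, and the telescoping count comes straight from Lemma~\ref{nondecreasing chi}, with no need for the Frobenius-number result in Lemma~\ref{largest nonelement} or the greening/reddening picture at all. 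Your approach is shorter and more self-contained; the paper's approach, on the other hand, reinforces the pictorial framework (counting reddening boxes) that it relies on later in the $d$-invariant computation of Section~\ref{results d invariants}.
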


\begin{proof}
Note that $\chi(0)=\Delta(-\alpha)+ \cdots + \Delta(-1)$. Since all negative numbers have nonpositive $\Delta$ values, $\chi(0)$ is equal to the total number of nonnegative reddening boxes, i.e. the number of nonnegative $s \not\in H$. By Lemma~\ref{largest nonelement}, the maximal nonelement of $H$ is 
\[2t-1=(n-2)\alpha-\sum_{i=1}^{n-1}\frac{\alpha}{a_i}.\]
Furthermore, out of the $2t$ blue bordered numbers $0,a_n, 2a_n, \cdots , (2t-1)a_n$, exactly half are reddening. This is because by Theorem \ref{deltadesc} (noting $N_0+\alpha=(2t-1)a_n$),
\[1=\Delta(sa_n)+\left(\Delta(N_0-sa_n)+1\right)=\Delta(sa_n)+\Delta((2t-1)a_n-sa_n),\]
so exactly one of $sa_n$ and $(2t-1)a_n-sa_n$ are reddening. Hence there are exactly $t$ nonnegative reddening boxes.
\end{proof}

Denote the $\Delta$-functions of $Y$ and $Y'$ as $\Delta(\bullet)$ and $\Delta'(\bullet)$ respectively. Consider the grids representing these two $\Delta$-functions. Observe that the following transformation on the $\Delta$ grid turns it into the $\Delta'$ grid:
\begin{itemize}
    \item Take each blue border, say around some number $sa_n$, and move it down $s$ cells to the number $sa_n+s\alpha=sa_n'$.
    \item In addition, change the $\Delta$ values by stipulating that the $\Delta$ values inside each blue border remains the same (note we have $\Delta(sa_n)=\Delta'(sa_n')$ since $sa_n$ is primitive if and only if $sa_n'$ is primitive). We then enforce Lemma \ref{delta in columns}, that the $\Delta$ values inside each cell is the same as the one above it, unless it has a blue border (in which case it is one greater). 
\end{itemize}
The following important observation about $\chi$ is clear from Lemmas \ref{nondecreasing chi} and \ref{chi of zero}.

\begin{lemma}\label{chi across picture}
We have that
\begin{enumerate}
    \item $\chi(x) < 0$ if $x \leq (t-1)a_n$,
    \item $\chi(x) = 0$ if $(t-1)a_n+1 < x \leq ta_n$,
    \item $\chi(x) > 0$ if $x > ta_n$. 
\end{enumerate}
\end{lemma}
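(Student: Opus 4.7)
The plan is to combine Lemma~\ref{nondecreasing chi} and Lemma~\ref{chi of zero} to produce a closed form for $\chi(y)$, then read off all three cases by inspection. For any $y \geq 1$, I would telescope
\[\chi(y) - \chi(0) = \sum_{x=0}^{y-1}\bigl(\chi(x+1) - \chi(x)\bigr).\]
By Lemma~\ref{nondecreasing chi}, each summand is $1$ precisely when $a_n \mid x$ and $0$ otherwise, so the right-hand side counts the multiples of $a_n$ in $\{0, 1, \ldots, y-1\}$, namely $0, a_n, 2a_n, \ldots$. That count equals $\lfloor (y-1)/a_n\rfloor + 1$, which a one-line case split (on whether $a_n \mid y$) shows equals $\lceil y/a_n\rceil$. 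Combined with $\chi(0) = -t$ from Lemma~\ref{chi of zero}, this gives the closed form
\[\chi(y) = -t + \left\lceil \frac{y}{a_n}\right\rceil\qquad\text{for all } y \geq 1.\]

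With this formula in hand, the three assertions follow immediately. On $1 \leq y \leq (t-1)a_n$ we have $\lceil y/a_n\rceil \leq t-1$, so $\chi(y) \leq -1 < 0$; the remaining point $y = 0$ is handled by $\chi(0) = -t < 0$. On $(t-1)a_n < y \leq ta_n$ we have $\lceil y/a_n\rceil = t$, hence $\chi(y) = 0$. Finally, on $y > ta_n$ we have $\lceil y/a_n\rceil \geq t+1$, hence $\chi(y) \geq 1 > 0$.

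There is essentially no obstacle to surmount: the substantive content already lives in Lemmas~\ref{nondecreasing chi} and~\ref{chi of zero}, and this statement is a bookkeeping consequence. The only point requiring mild care is aligning the two boundary indices $(t-1)a_n$ and $ta_n$ with the multiples of $a_n$ at which $\chi$ jumps, which the ceiling formula makes transparent. (As an aside, the same formula shows $\chi((t-1)a_n + 1) = 0$ as well, so the middle range in the statement could be extended one step to the left without changing the content.)
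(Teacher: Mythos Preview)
Your proof is correct and follows precisely the approach the paper intends: the paper simply asserts that the lemma ``is clear from Lemmas~\ref{nondecreasing chi} and~\ref{chi of zero},'' and your argument fills in exactly those details by telescoping to obtain the closed form $\chi(y) = -t + \lceil y/a_n\rceil$. The only minor omission is that you handle $y\ge 0$ explicitly but not $y<0$; the same telescoping (in the other direction) gives the identical formula there, so the first case also covers negative $x$.
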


This and the later Theorem \ref{glob min crit strip} motivates the following definition:

\begin{definition}
We call the interval $((t-1)a_n-\alpha,(t-1)a_n]$ the \textit{critical strip}. Note that the critical strip consists of the $\alpha$ consecutive numbers up to the $(t-1)$-th blue bordered box.
\end{definition}


The critical strip two very important properties, given in the following two theorems.

\begin{theorem}\label{glob min crit strip}
The first occurrence of the global minimum $\displaystyle\min_{x\ge 0}\tau(x)$ of the $\tau$-sequence occurs in the critical strip. That is, if $m$ is the smallest positive integer such that $\displaystyle\tau(m)=\min_{x\ge 0}\tau(x)$, then $m\in((t-1)a_n-\alpha,(t-1)a_n]$.
\end{theorem}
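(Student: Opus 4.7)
The plan is to combine two applications of Lemma~\ref{chi across picture}, translated via the identity $\chi(x) = \tau(x) - \tau(x-\alpha)$, to sandwich $m$ inside the critical strip. The sign of $\chi(x)$ directly controls how $\tau(x)$ compares with $\tau(x-\alpha)$, so the idea is to rule out $m$ being too far left (using that $\chi$ is negative on $(-\infty, (t-1)a_n]$) and too far right (using that $\chi$ is nonnegative on $((t-1)a_n, \infty)$).

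For the lower bound $m > (t-1)a_n - \alpha$: if $(t-1)a_n - \alpha < 0$, the bound is automatic from $m \geq 0$. Otherwise, suppose for contradiction $m \leq (t-1)a_n - \alpha$, so that $m + \alpha \leq (t-1)a_n$. Lemma~\ref{chi across picture} then gives $\chi(m+\alpha) < 0$, i.e., $\tau(m+\alpha) < \tau(m)$, contradicting the minimality of $\tau(m)$ on $[0, \infty)$.

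For the upper bound $m \leq (t-1)a_n$: suppose for contradiction $m > (t-1)a_n$. I would iterate backwards by $\alpha$: as long as $m - j\alpha > (t-1)a_n$, Lemma~\ref{chi across picture} gives $\chi(m - j\alpha) \geq 0$, hence $\tau(m - j\alpha) \geq \tau(m - (j+1)\alpha)$. Choosing $k^\ast \geq 1$ minimal with $y := m - k^\ast\alpha \leq (t-1)a_n$, we obtain $y \in ((t-1)a_n - \alpha, (t-1)a_n]$ and $\tau(m) \geq \tau(y)$ by telescoping. If $y \geq 0$, then $y < m$ with $\tau(y) \leq \tau(m)$, contradicting that $m$ is the first point at which the minimum is attained. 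If $y < 0$, I would invoke Property~2 of Theorem~\ref{deltadesc}, the symmetry $\Delta(x) = -\Delta(N_0-x)$, together with Property~1 ($\Delta \geq 0$ on $(N_0, \infty)$), to conclude $\Delta(i) \leq 0$ for all $i < 0$; hence $\tau(y) = -\sum_{i=y}^{-1}\Delta(i) \geq 0 = \tau(0)$, so $\tau(m) \geq \tau(0)$, and since $m \geq 1 > 0$ the point $x = 0$ is an earlier occurrence of the minimum, a contradiction.

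I expect the main obstacle to be the sub-case $y < 0$ in the upper-bound argument, which arises precisely when the critical strip extends below zero (equivalently, when $(t-1)a_n < \alpha$, as happens when $t = 1$). Pure iteration by $\alpha$ does not by itself produce an earlier nonnegative point witnessing the minimum in this regime, so the symmetry of $\Delta$ from Theorem~\ref{deltadesc} must be invoked separately to anchor the comparison against $\tau(0) = 0$.
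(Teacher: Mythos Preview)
Your argument is correct and follows the same approach as the paper: both use Lemma~\ref{chi across picture} to show that for $x\le (t-1)a_n-\alpha$ one has $\tau(x+\alpha)<\tau(x)$, while for $x>(t-1)a_n$ one has $\tau(x-\alpha)\le\tau(x)$, forcing the first global minimum into the critical strip. Your treatment is in fact more careful than the paper's two-line proof, since you explicitly handle the boundary case where stepping back by $\alpha$ lands at a negative index (relevant when $(t-1)a_n<\alpha$, e.g.\ $t=1$) via the nonpositivity of $\Delta$ on negative integers; the paper glosses over this.
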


\begin{proof}
Note that by Lemma \ref{chi across picture}, if $x\le (t-1)a_n-\alpha$, then $\chi(x+\alpha)<0$, so $\tau(x+\alpha)<\tau(x)$. In addition, if $x>(t-1)a_n$, then $\chi(x)\ge 0$, so $\tau(x-\alpha)\le \tau(x)$. Therefore, the first occurrence of the global minimum of $\tau$ occurs in the critical strip.
\end{proof}

\section{Results on $d$-Invariants}\label{results d invariants}

In this section, we prove the following theorem. 

\begin{theorem}\label{three}
The $d$-invariants of the Seifert homology spheres $Y=\Sigma(a_1,a_2,\dots,a_n)$ and $Y'=\Sigma(a_1,a_2,\dots,a_n+\alpha)$ are equal for all pairwise relatively prime positive integers $a_1,a_2,\dots,a_n$.
\end{theorem}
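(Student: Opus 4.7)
The plan is to prove $d(Y) = d(Y')$ by comparing the $\tau$-sequences on their respective critical strips using the pictorial $\Delta$-grid. Recall that $d(Y) = \frac{K^2 + |\Gamma|}{4} - 2\min_{x \ge 0}\tau(x)$, with the minimum achieved inside the critical strip $((t-1)a_n - \alpha, (t-1)a_n]$ by Theorem~\ref{glob min crit strip}, and analogously for $Y'$ on $((t-1)a_n' - \alpha, (t-1)a_n']$. Crucially, $t$ depends only on $a_1, \ldots, a_{n-1}$, so both critical strips have the same width $\alpha$.

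First, I would show a cellwise $\Delta$-matching across the two strips: for every $k \in [0, \alpha - 1]$, $\Delta_Y((t-1)a_n - k) = \Delta_{Y'}((t-1)a_n' - k)$. Since $a_n \equiv a_n' \pmod \alpha$, both cells lie in the same column of the grid. By iterating Lemma~\ref{delta in columns}, the value of $\Delta$ at a cell equals its value at the largest blue border in the same column at or below the cell. One checks directly that the maximal admissible $s$-value is the same for $Y$ and $Y'$: the difference between the two upper bounds (forced by $sa_n \le (t-1)a_n - k$ versus $sa_n' \le (t-1)a_n' - k$) is $k/(a_n a_n') < 1/a_n$, which is strictly less than the distance from any non-integer $kV/a_n$ (where $V = (ua_n - 1)/\alpha$, $u = a_n^{-1} \bmod \alpha$) to the next integer. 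Since the primitive $s$-value in each column is determined by the semigroup $H$ generated by the $\alpha/a_i$, which is unchanged between $Y$ and $Y'$, the $\Delta$-values at the corresponding blue borders agree, yielding the matching.

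Telescoping the matched $\Delta$-values across the strip gives
\[
\min \tau_Y - \tau_Y((t-1)a_n - \alpha) = \min \tau_{Y'} - \tau_{Y'}((t-1)a_n' - \alpha),
\]
so the gap between the two minima equals the gap between the $\tau$-values at the two left strip endpoints. To conclude $d(Y) = d(Y')$, one combines this with an explicit evaluation of $\tau_Y((t-1)a_n - \alpha) - \tau_{Y'}((t-1)a_n' - \alpha)$ via the identity $\chi(x) = \lceil x/a_n \rceil - t$, which telescopes $\tau$ in steps of $\alpha$ down to a common reference point, and compares the result to the change in the grading shift $\frac{K^2 + |\Gamma|}{4}$ between the two plumbings.

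The main obstacle is this last matching step: verifying that the explicit sum of ceiling differences obtained from the $\chi$-telescoping equals precisely $\frac{1}{8}(K_Y^2 + |\Gamma_Y| - K_{Y'}^2 - |\Gamma_{Y'}|)$, where the right-hand side is computable from the Seifert plumbing data. This Dedekind-sum-like identity is what makes the argument more intricate than the Lidman--Pinz\'on surgery proof, but the $\Delta$-grid framework set up here is precisely what enables the more delicate analysis of maximal monotone subroots in Theorem~\ref{monotonesubroot}.
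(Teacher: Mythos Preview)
Your plan is essentially the paper's own strategy: (i) show the $\Delta$-values on the two critical strips match cellwise, (ii) deduce that $\min\tau_{Y'}-\min\tau_Y$ equals the difference of the $\tau$-values at the strip endpoints, and (iii) check that this difference cancels the change in the grading shift $\tfrac{K^2+|\Gamma|}{4}$. Two points are worth flagging.

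First, your argument for the $\Delta$-matching is not quite right as written. The difference between the two upper bounds on $s$ coming from $sa_n\le (t-1)a_n-k$ and $sa_n'\le (t-1)a_n'-k$ is
\[
\frac{k}{a_n}-\frac{k}{a_n'}=\frac{k\alpha}{a_na_n'},
\]
not $k/(a_na_n')$; for $k$ close to $\alpha$ this is not obviously small enough to force the same integer $s$, so the inequality you wrote does not do the job. The paper sidesteps this entirely: under $sa_n\mapsto sa_n'$ every blue border stays in its column, keeps its greening/reddening status (which depends only on whether $s\in H$), and the relative vertical order of the borders within each column is preserved. Since the strip endpoint $(t-1)a_n$ is itself the $(t-1)$-st blue border, the quantity
\[
\#(\text{greening borders nonstrictly above})-\#(\text{reddening borders strictly below})
\]
at a given cell of the strip is unchanged by the transformation, giving the matching with no floor estimates.

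Second, the ``main obstacle'' you leave open is precisely what the paper carries out, in two halves. The grading-shift side is computed directly from the formula $d(Y)=\tfrac14(\varepsilon^2e+e+5-12\sum s(b_i,a_i))-2\min\tau_Y$; after applying Dedekind reciprocity to $s(b_n,a_n)-s(b_n',a_n')$ one finds that $d(Y)=d(Y')$ is equivalent to
\[
\min\tau_{Y'}-\min\tau_Y=-\tfrac18\bigl((2t-1)^2-1\bigr)=-\binom{t}{2}.
\]
For the $\tau$-side, rather than your $\chi$-telescoping, the paper writes $\sum_{x=0}^{(t-1)a_n}\Delta(x)$ as a signed sum over greening and reddening borders of the number of cells below (resp.\ above) them in $[0,(t-1)a_n]$, and then tracks how each border's contribution changes under $a_n\mapsto a_n'$. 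For a greening border $sa_n$ inside the region the change is $(t-1)-s$; for a reddening border inside it is $s$; and the $\ell$ reddening borders below the region each contribute $t-1$. Summing gives
\[
\sum_{x=0}^{(t-1)a_n'}\Delta'(x)-\sum_{x=0}^{(t-1)a_n}\Delta(x)=-\sum_{s=0}^{t-1}s=-\binom{t}{2},
\]
closing the argument. Your identity $\chi(x)=\lceil x/a_n\rceil-t$ is correct and would also sum to the same value, so either bookkeeping works once the matching lemma is established properly.
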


In order to do this, we will use the following computational formula from \cite{borodzik2013heegaard}.

\begin{theorem}\label{d formula}
The $d$-invariant of a Seifert homology sphere can be computed as
\[d(Y)=d(\Sigma(a_1,a_2,\ldots,a_n)) = \frac{1}{4}\left(\varepsilon^2 e + e + 5 - 12\sum_{i=1}^n s(b_i, a_i)\right)-2\min_{x \geq 0} \tau_Y(x),\]
where, recall, $(e_0,b_1,b_2,\ldots,b_n)$ satisfy 
\begin{align}\label{eq0}
e_0P + P\sum_{i=1}^n \frac{b_i}{a_i} = -1
\end{align}
with $1\le b_i<a_i$ for all $i\in[1,n]$, where $P=\prod_{i=1}^na_i$. We also define 
\[e = e_0 + \sum_{i=1}^n \frac{b_i}{a_i} \text{ and } \varepsilon = \frac{1}{e}\left(-(n-2) + \sum_{i=1}^n \frac{1}{a_i}\right).\] 
Note that $\tau_Y$ is the $\tau$-sequence for $Y$ and that $s(b_i,a_i)$ is a Dedekind sum, given by the formula
\[s(h,k):=\sum_{i=1}^{k-1}\left\langle\frac{i}{k}\right\rangle\left\langle\frac{hi}{k}\right\rangle,\]
where $\langle \bullet\rangle$ is the sawtooth function
\[\langle x\rangle :=\begin{cases}
0 & x\in\mathbb{Z}\\
x-\lfloor x\rfloor-\frac{1}{2} & x\not\in\mathbb{Z}
\end{cases}.\]
Also note that the first term in the $d$-invariant formula is precisely the global grading shift $\frac{K^2+|\Gamma|}{4}$ for the canonical cohomology class $K$.
\end{theorem}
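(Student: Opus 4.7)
The plan is to apply Theorem~\ref{d formula} to both $Y$ and $Y'$, writing $d(Y) = C_Y - 2\min_{x\ge 0}\tau_Y(x)$ with $C_Y := \tfrac{1}{4}(\varepsilon^2 e + e + 5 - 12\sum_i s(b_i,a_i))$, and then showing that the two discrepancies $C_Y - C_{Y'}$ and $-2(\min\tau_Y - \min\tau_{Y'})$ are negatives of each other. The proof thus factors into a Dedekind-sum computation on the one hand and a combinatorial $\tau$-sequence computation on the other.

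For the grading-shift side, a crucial preliminary observation is that $a_n' \equiv a_n \pmod{a_i}$ for every $i < n$ (since $\alpha$ is a multiple of $a_i$), so the congruence~(\ref{computing b}) that determines $b_i$ is identical for $Y$ and $Y'$ when $i < n$. Hence $b_i' = b_i$ for $i < n$, and all Dedekind sums except $s(b_n,a_n)$ versus $s(b_n', a_n + \alpha)$ cancel in $C_Y - C_{Y'}$. The remaining difference is an explicit rational expression in $a_1,\ldots,a_n$, plus one Dedekind-sum difference which can be handled via the reciprocity law together with the relations $\alpha b_n \equiv -1 \pmod{a_n}$ and $\alpha b_n' \equiv -1 \pmod{a_n+\alpha}$.

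For the $\tau$-minimum side, I would exploit Theorem~\ref{glob min crit strip}: both minima are attained in their respective critical strips, centered at $x = (t-1)a_n$ for $Y$ and $x = (t-1)a_n'$ for $Y'$, where the quantity $t$ depends only on $a_1,\ldots,a_{n-1}$ and is therefore common to both spheres. I would compute the minimum value by summing $\Delta$ over $[0,(t-1)a_n)$ column-by-column in the $\alpha$-wide pictorial grid, using Lemma~\ref{delta in columns} to localize all changes in $\Delta$ to the blue-bordered multiples of $a_n$, together with the greening/reddening classification and Lemma~\ref{primitive semigroup} pinning down $\Delta$ at primitive borders. The transformation from the $Y$-grid to the $Y'$-grid, in which the $s$-th blue border slides down by $s$ cells, then gives a direct expression for $\min\tau_Y - \min\tau_{Y'}$.

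The main obstacle is matching these two sides. The $\tau$-side yields a combinatorial sum indexed by residue classes modulo $\alpha$ and mediated by the numerical semigroup $G$ of Definition~\ref{numsemigroup} (via Properties~3 and~4 of Theorem~\ref{deltadesc}); the grading-shift side yields a closed-form rational expression plus the single leftover Dedekind-sum difference. Reconciling the two expressions will require a careful Dedekind reciprocity argument, likely guided by the symmetry $\Delta(x) = -\Delta(N_0-x)$ of Theorem~\ref{deltadesc}(2) to collapse the combinatorial sum into a form that visibly cancels against the analytic side, thereby yielding $d(Y) = d(Y')$.
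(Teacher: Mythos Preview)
The statement you were given (Theorem~\ref{d formula}) is not proved in the paper at all; it is quoted as a known formula from \cite{borodzik2013heegaard}. Your proposal does not attempt to prove it either --- you explicitly \emph{apply} it as an input. What you have actually sketched is a proof of a different result, Theorem~\ref{three} (equivalently Theorem~\ref{dinvariant}), that $d(Y)=d(Y')$.

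Read as a proposal for Theorem~\ref{three}, your outline coincides with the paper's argument. The paper likewise splits $d(Y)-d(Y')$ into a grading-shift piece and a $\tau$-minimum piece. For the first piece (Lemma~\ref{messy d formula}), it uses exactly the observations you list: $b_i'=b_i$ for $i<n$ so only $s(b_n,a_n)-s(b_n',a_n')$ survives, and this difference is evaluated via Dedekind reciprocity together with $\alpha b_n\equiv -1\pmod{a_n}$, yielding a closed form depending only on $a_1,\dots,a_{n-1}$. For the second piece (Section~4.2), the paper uses Theorem~\ref{glob min crit strip} to locate both minima in their critical strips and computes $\min\tau_{Y'}-\min\tau_Y$ by the grid transformation (each blue border $sa_n$ slides down $s$ cells), summing contributions of greening and reddening borders.

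Your final paragraph overestimates the difficulty of the matching step. In the paper the $\tau$-side does not produce anything requiring further Dedekind-sum manipulation: the greening and reddening contributions combine to the bare triangular number $-\sum_{s=0}^{t-1}s=-t(t-1)/2$, and substituting the definition of $t$ matches the closed form from Lemma~\ref{messy d formula} directly. No additional reciprocity or semigroup symmetry argument is needed at that stage.
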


There are two main components to the formula for the $d$-invariant in Theorem \ref{d formula}: the first term, which is ultimately some function of $a_1,a_2, \dots, a_{n-1}$; and the second term, which depends on the $\tau$ sequence for $Y$. 

\subsection{Calculating the required difference in global minima of $\tau$-sequences}

To show that $d(Y')-d(Y)=0$, we start by computing the difference in the first terms to reduce it to a problem in finding the difference in the global minima of $\tau_Y$ and $\tau_{Y'}$.

\begin{lemma}\label{messy d formula}
To prove Theorem~\ref{three}, it is equivalent to demonstrate that
\[\min_{x \geq 0} \tau_{Y'}(x) - \min_{x \geq 0} \tau_{Y}(x) = -\frac{1}{8}\left(\left((n-2)\frac{P}{a_n} - \sum_{i=1}^{n-1} \frac{P}{a_na_i}\right)^2-1\right).\]
Note that the right hand side is only a function of $a_1,a_2,\dots,a_{n-1}$.
\end{lemma}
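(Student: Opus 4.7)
The approach is to directly expand $d(Y') - d(Y)$ using Theorem~\ref{d formula} and to evaluate in closed form every term that does not involve $\min \tau$, so that the equality $d(Y) = d(Y')$ becomes precisely the displayed equation in the lemma. First I would exploit equation~(\ref{eq0}): dividing both sides by $P$ gives $e = -1/P$, from which
\[\varepsilon = \frac{1}{e}\left(-(n-2) + \sum_{i=1}^n \frac{1}{a_i}\right) = (n-2)P - \sum_{i=1}^n \frac{P}{a_i} = N_0\]
is automatically the integer $N_0$, and $\varepsilon^2 e + e = -(N_0^2+1)/P$ (likewise for $Y'$). Writing $u := (n-2)\alpha - \sum_{i=1}^{n-1} \alpha/a_i$ gives the clean forms $N_0 = u a_n - \alpha$ and $N_0' = u(a_n+\alpha) - \alpha$, and a short expansion produces
\[\frac{N_0^2+1}{P} - \frac{(N_0')^2+1}{P'} = -u^2 + \frac{\alpha^2+1}{a_n(a_n+\alpha)}.\]

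Next I would handle the Dedekind sum differences. A quick check of congruence~(\ref{computing b}) modulo $a_i$, using $a_n+\alpha \equiv a_n \pmod{a_i}$ for $i<n$, shows that $b_i' = b_i$ for each $i<n$, so only $s(b_n',a_n') - s(b_n,a_n)$ contributes. To evaluate this I would combine three standard identities: the sign rule $s(-h,k) = -s(h,k)$, the inversion identity $s(h^{-1},k) = s(h,k)$ for $\gcd(h,k) = 1$ (both immediate from the definition given in Theorem~\ref{d formula}), and Dedekind reciprocity
\[s(h,k) + s(k,h) = \frac{1}{12}\left(\frac{h}{k} + \frac{k}{h} + \frac{1}{hk}\right) - \frac{1}{4}.\]
Since $\alpha b_n \equiv -1 \pmod{a_n}$ yields $\alpha \equiv -b_n^{-1} \pmod{a_n}$, we obtain $s(\alpha,a_n) = -s(b_n,a_n)$; likewise $s(\alpha,a_n') = -s(b_n',a_n')$. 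Applying reciprocity to each pair, together with the crucial observation $s(a_n,\alpha) = s(a_n',\alpha)$ (which holds because $a_n \equiv a_n' \pmod \alpha$), a short simplification gives
\[s(b_n',a_n') - s(b_n,a_n) = \frac{1}{12}\left(\frac{\alpha^2+1}{a_n(a_n+\alpha)} - 1\right).\]

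Assembling the pieces, the awkward $(\alpha^2+1)/(a_n a_n')$ contributions from the $\varepsilon^2 e$ computation and from the $-12$-weighted Dedekind sum computation cancel exactly, leaving $\tfrac{1}{4}(1 - u^2)$ as the total non-$\tau$ contribution to $d(Y') - d(Y)$. Since $d(Y') - d(Y) = \tfrac{1}{4}(1-u^2) - 2(\min \tau_{Y'} - \min \tau_Y)$, the equality $d(Y) = d(Y')$ is equivalent to $\min \tau_{Y'} - \min \tau_Y = -\tfrac{1}{8}(u^2 - 1)$, which is exactly the claim of the lemma once one notes $u = (n-2)P/a_n - \sum_{i=1}^{n-1} P/(a_n a_i)$. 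The main technical obstacle is the Dedekind sum manipulation and, more importantly, verifying that the fractional $(\alpha^2+1)/(a_n a_n')$ terms really do cancel between the two sources; this cancellation is the small miracle that forces the right-hand side to depend only on $a_1,\ldots,a_{n-1}$.
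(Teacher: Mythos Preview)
Your proposal is correct and follows essentially the same approach as the paper: both compute $e=-1/P$ and $\varepsilon=N_0$ from equation~(\ref{eq0}), both reduce the Dedekind-sum difference via the sign rule, the inverse rule, and reciprocity (using that $b_i'=b_i$ for $i<n$ and that $s(a_n,\alpha)=s(a_n',\alpha)$ since $a_n\equiv a_n'\pmod\alpha$), and both observe the cancellation of the $(\alpha^2+1)/(a_na_n')$ terms to leave $\tfrac14(1-u^2)$.

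The only notable difference is organizational: by introducing $u=(n-2)\alpha-\sum_{i<n}\alpha/a_i$ at the outset and writing $N_0=ua_n-\alpha$, $N_0'=ua_n'-\alpha$, you compute $\tfrac{N_0^2+1}{P}-\tfrac{(N_0')^2+1}{P'}$ in a single clean line, whereas the paper expands $\varepsilon^2 e$ for each sphere into a multinomial in the $P/a_i$, subtracts term by term, and then re-factors at the end. Your route avoids that intermediate expansion and makes the cancellation with the Dedekind-sum contribution transparent, but the two arguments are mathematically identical.
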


Dividing both sides of equation (\ref{eq0}) in the statement of Theorem~\ref{d formula} by $P$ gives $e=-\frac{1}{P}$, so $\varepsilon = (n-2)P - \sum_{i=1}^n \frac{P}{a_i}$. We now plug these into our equation for the $d$-invariant given by the theorem to get 
\[d(Y) = \frac{1}{4}\left(\underbrace{-\left((n-2)P - \sum_{i=1}^n \frac{P}{a_i}\right)^2\cdot \frac{1}{P}
}_{d_1(Y)}\underbrace{-\frac{1}{P} + 5 - 12\sum_{i=1}^n s(b_i,a_i)}_{d_2(Y)}\right) - 2\min_{x \geq 0} \tau_{Y}(x).\]
Define $d_1(Y)$ and $d_2(Y)$ as above, so that $\displaystyle d(Y)=\frac{1}{4}(-d_1(Y)+d_2(Y))-2\min_{x\ge 0}\tau_{Y}(x)$. Now, note that
\[d_1(Y)=-(n-2)^2P -\sum_{i=1}^n \frac{P}{a_i^2} - \sum_{1 \leq i < j \leq n} \frac{2P}{a_ia_j} + \sum_{i=1}^n \frac{2(n-2)P}{a_i}.\]
We will consider the $d_2(Y)$ and $\displaystyle -2\min_{x\ge 0}\tau_{Y}(x)$ terms later.
We now perform the same $d$-invariant computation for the Brieskorn sphere $Y'=\Sigma(a_1,a_2,\ldots,a_{n-1},a_n+\frac{P}{a_n})$. In what follows, let $a_n'=a_n+\frac{P}{a_n}$ and $P' = \frac{Pa_n'}{a_n}$. Similarly to the previous sphere,
\[d(Y') = \frac{1}{4}\left(\varepsilon'^2e' + e' + 5 - 12\sum_{i=1}^{n-1} s(b_i', a_i) - 12s(b_n',a_n')\right)-2\min_{x \geq 0} \tau_{Y'}(x),\]
where $(e_0',b_1',b_2',\ldots,b_{n-1}',b_n')$ satisfy
\[e_0'P'+P'\left(\sum_{i=1}^{n-1} \frac{b_i'}{a_i} + \frac{b_n'}{a_n'}\right)=-1,\]
with $b_i \in [1,a_i-1]$ for all $1 \leq i \leq n-1$ and $B \in [1,A-1]$. Also, as before, we have 
\[e' = e_0'+\sum_{i=1}^{n-1} \frac{b_i'}{a_i} + \frac{b_n'}{a_n'} \text{ and } \varepsilon' = \frac{1}{e'}\left(-(n-2)+\sum_{i=1}^{n-1} \frac{1}{a_i} + \frac{1}{a_n'}\right).\]
Thus, we have $e' = -\frac{1}{P'}$ and $\varepsilon'=(n-2)P' - \sum_{i=1}^{n-1} \frac{P'}{a_i} - \frac{P'}{a_n'}$. Substituting, we have 
\begin{align*}
d(Y') = \frac{1}{4}\left(\underbrace{-\left((n-2)P'-\sum_{i=1}^{n-1} \frac{P'}{a_i} - \frac{P'}{a_n'}\right)^2\frac{1}{P'}}_{d_1(Y')}\underbrace{- \frac{1}{P'} + 5 - 12\sum_{i=1}^{n-1} s(b_i',a_i) - 12s(b_n',a_n')}_{d_2(Y')}\right) \\
- 2\min_{x \geq 0} \tau_{Y'}(x).
\end{align*} 
Once again, we define $d_1(Y')$ and $d_2(Y')$ in such a way that $\displaystyle d(Y')=\frac{1}{4}(d_1(Y')+d_2(Y'))-2\min_{x\ge 0}\tau_{Y'}(x)$. Again, we expand the first term inside the parentheses to get 
\[d_1(Y')=-(n-2)^2P' - \sum_{i=1}^{n-1} \frac{P'}{a_i^2} - \frac{P'}{a_n'^2} - \sum_{1 \leq i < j \leq n-1} \frac{2P'}{a_ia_j} - \sum_{i=1}^{n-1} \frac{2P'}{a_n'a_i} + \sum_{i=1}^{n-1} \frac{2(n-2)P'}{a_i} + \frac{2(n-2)P'}{a_n'}.\]
We now compute $d_1(Y')-d_1(Y)$. We make heavy use of the identities $a_n'-a_n=\frac{P}{a_n}$, $P'-P = \frac{P^2}{a_n^2}$, and $\frac{P'}{a_n'} = \frac{P}{a_n}$ to eliminate all occurrences of $a_n'$ and $P'$ from this difference. All of these identities are easily verified from the definitions of $a_n'$ and $P'$. With some computation, the difference comes out to:

\[d_1(Y')-d_1(Y)=-(n-2)^2\cdot \frac{P^2}{a_n^2} - \sum_{i=1}^{n-1} \frac{P^2}{a_n^2a_i^2} + \frac{P^2}{a_n^2}\cdot\frac{1}{a_n'a_n} - \sum_{1 \leq i < j \leq n-1} \frac{2P^2}{a_n^2 a_i a_j} + \sum_{i=1}^{n-1} \frac{2(n-2)P^2}{a_n^2a_i}.\]
We can also evaluate the second difference term as

\begin{align*}
    d_2(Y')-d_2(Y)&=\frac{1}{P}-5+12\sum_{i=1}^ns(b_i,a_i)-\frac{1}{P'}+5-12\sum_{i=1}^{n-1}s(b_i',a_i)+12s(b_n',a_n')\\
    &=\frac{1}{a_n'a_n}+12s(b_n,a_n)-12s(b_n',a_n').
\end{align*}

The final simplification is due to the fact that $b_i'=b_i$ for all $1 \leq i \leq n-1$, which is true by equation (\ref{computing b}).

To evaluate this, we use the following facts about Dedekind sums from~\cite{shipp1965table}:
\begin{itemize}
    \item $s(h,k)=-s(-h,k)$.
    \item If $h_1\equiv h_2\pmod{k}$, then $s(h_1,k)=s(h_2,k)$.
    \item When $h_1h_2 \equiv 1 \pmod{k}$, $s(h_1,k) = s(h_2,k)$.
    \item $s(h,k) = -\frac{1}{4} + \frac{1}{12}\left(\frac{h}{k} + \frac{k}{h} + \frac{1}{kh}\right)-s(k,h)$ (The Dedekind Reciprocity Law).
\end{itemize}
Recall that $b_n \equiv -\frac{1}{a_1a_2\cdots a_{n-1}} \pmod{a_n}$. Similarly, $b_n' \equiv -\frac{1}{a_1a_2\cdots a_{n-1}} \pmod{a_n'}$. Thus,
\begin{align*}
    s(b_n,a_n) &= s(-a_1a_2\cdots a_{n-1},a_n)\\
    &= -s(a_1a_2\cdots a_{n-1},a_n) \\ 
    &=\frac{1}{4}-\frac{1}{12}\left(\frac{a_1a_2\cdots a_{n-1}}{a_n} + \frac{a_n}{a_1a_2 \cdots a_{n-1}} + \frac{1}{a_1a_2\cdots a_n}\right) + s(a_n,a_1a_2\ldots a_{n-1}),
\end{align*}
where we use the Dedekind Reciprocity Law. Similarly, \[s(b_n',a_n') = \frac{1}{4} - \frac{1}{12}\left(\frac{a_1a_2\ldots a_{n-1}}{a_n'} + \frac{a_n'}{a_1a_2 \ldots a_{n-1}} + \frac{1}{a_1a_2 \ldots a_{n-1}a_n'}\right) + s(a_n',a_1a_2\ldots a_{n-1}).\]
Subtracting, we get 
\begin{align*}
   12s(b_n,a_n)-12s(b_n',a_n') = \frac{a_1a_2\cdots a_{n-1}}{a_n'} + \frac{a_n'}{a_1a_2 \cdots a_{n-1}} + \frac{1}{a_1a_2\cdots a_{n-1}a_n'} \\ -\frac{a_1a_2\cdots a_{n-1}}{a_n} - \frac{a_n}{a_1a_2\cdots a_{n-1}} - \frac{1}{a_1a_2\cdots a_n}.
\end{align*}
Using the identity $a_n'-a_n=a_1a_2\cdots a_{n-1}$, we get 
\[12s(b_n,a_n)-12s(b_n',a_n') = \frac{-(a_1a_2\cdots a_{n-1})^2 + a_n'a_n - 1}{a_n'a_n}\]
after a bit of computation. Thus, we have:
\begin{align*}
    d(Y')-d(Y) = \frac{1}{4}\Bigg( -(n-2)^2\left(\frac{P^2}{a_n^2}\right) - \sum_{i=1}^{n-1} \frac{P^2}{a_n^2a_i^2} + \frac{P^2}{a_n^2}\left(\frac{1}{a_n'a_n}\right) -  \sum_{1 \leq i < j \leq n-1} \frac{2P^2}{a_n^2 a_i a_j}\\
    +\sum_{i=1}^{n-1} \frac{2(n-2)P^2}{a_n^2a_i} + \frac{1}{a_n'a_n} + \frac{-(a_1a_2\cdots a_{n-1})^2 + a_n'a_n - 1}{a_n'a_n}\Bigg) + 2\min_{x \geq 0} \tau_{Y}(x) - 2\min_{x \geq 0} \tau_{Y'}(x).
\end{align*}
For the $d$-invariants to be equal, we must have
\begin{align*}
    \min_{x \ge 0} \tau_{Y'}(x) - \min_{x \geq 0} \tau_{Y}(x) = \frac{1}{8}\Bigg(-(n-2)^2 \left(\frac{P^2}{a_n^2}\right) - \sum_{i=1}^{n-1} \frac{P^2}{a_n^2a_i^2} + \frac{P^2}{a_n^2}\left(\frac{1}{a_n'a_n}\right) \\
    -  \sum_{1 \leq i < j \leq n-1} \frac{2P^2}{a_n^2 a_i a_j}+\sum_{i=1}^{n-1} \frac{2(n-2)P^2}{a_n^2a_i} + \frac{1}{a_n'a_n} + \frac{-(a_1a_2\cdots a_{n-1})^2 + a_n'a_n - 1}{a_n'a_n}\Bigg).
\end{align*}
Note that $-(a_1a_2 \cdots a_{n-1})^2 = -\frac{P^2}{a_n^2}$, so the $\frac{P^2}{a_n^2}\left(\frac{1}{a_n'a_n}\right)$ and $-\frac{(-a_1a_2\cdots a_{n-1})^2}{a_n'a_n}$ cancel. Additionally, the $\frac{1}{a_n'a_n}$ and $-\frac{1}{a_n'a_n}$ terms cancel, so that we are left with 
\begin{align*}
    \min_{x \ge 0} \tau_{Y'}(x) - \min_{x \ge 0} \tau_{Y}(x) = \frac{1}{8}\Bigg(-(n-2)^2 \left(\frac{P^2}{a_n^2}\right) - \sum_{i=1}^{n-1} \frac{P^2}{a_n^2a_i^2} -  \sum_{1 \leq i < j \leq n-1} \frac{2P^2}{a_n^2 a_i a_j}\\
    +\sum_{i=1}^{n-1} \frac{2(n-2)P^2}{a_n^2a_i} + 1\Bigg).
\end{align*}
The expression inside the parentheses factors into 
\[1-\left((n-2)\frac{P}{a_n} - \sum_{i=1}^{n-1} \frac{P}{a_na_i}\right)^2,\]
which can be checked manually by expansion. This gives the desired result.\qed

\begin{remark}
Surprisingly, this quantity is tied to the numerical semigroup generated by the $n-1$ elements $\frac{\alpha}{a_i}$ for $i\in[0,n-1]$. Applying Lemma~\ref{largest nonelement} in the case where $n$ is replaced with $n-1$ gives that the maximal nonelement of the numerical semigroup $H$ is precisely
\[(n-2)\frac{P}{a_n}-\sum_{i=1}^{n-1}\frac{P}{a_n}{a_i},\]
which is the term found in Lemma \ref{messy d formula}.
\end{remark}

\subsection{Calculating the difference in global minima of $\tau$-sequences}

Using Lemma \ref{messy d formula} and our results about the $\Delta$-function and $\tau$-sequence in Section \ref{properties of tau}, we are now ready to prove Theorem \ref{three}.

\begin{proof}

We first note the following lemma:

\begin{lemma}
Under the grid transformation from $\Delta$ to $\Delta'$, the values of the $\Delta$- and $\Delta'$-functions in the critical strips of their respective grids remains the same. This is equivalent to the color schemes of the $\Delta$ and $\Delta'$ grids being the same in their respective critical strips.
\end{lemma}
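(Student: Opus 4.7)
The plan is to verify the lemma column-by-column. For each residue $r$ modulo $\alpha$, the critical strips of $Y$ and $Y'$ each contain exactly one cell in column $r$; call these $x_r$ and $x_r'$. Since both strips span $\alpha$ consecutive integers ending at $(t-1)a_n$ and $(t-1)a_n'$ respectively, and $(t-1)a_n\equiv (t-1)a_n'\pmod{\alpha}$, I can write $x_r=(t-1)a_n-c_r$ and $x_r'=(t-1)a_n'-c_r$ for a common offset $c_r\in[0,\alpha-1]$ depending only on $r$. This natural pairing $x_r\leftrightarrow x_r'$ is the correspondence the lemma refers to, and my goal is to show $\Delta(x_r)=\Delta'(x_r')$ for each $r$.

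Next I would invoke the description from Section~\ref{properties of tau}: $\Delta(x)$ equals the number of greening blue borders in its column nonstrictly above $x$ minus the number of reddening blue borders in that column strictly below $x$, and similarly for $\Delta'$. Since the greening/reddening status of the blue border at $sa_n$ depends only on whether $s\in H$, a condition that is intrinsic to the semigroup $H$ and does not involve $a_n$, corresponding blue borders (same $s$-value) in the two grids have the same status. It therefore suffices to show that, for every integer $s$ with $sa_n\equiv r\pmod{\alpha}$, the inequality $sa_n\le x_r$ is equivalent to $sa_n'\le x_r'$. Parametrizing $s=s_0+j\alpha$, where $s_0\in[0,\alpha-1]$ is the smallest nonnegative $s$ with $sa_n\equiv r\pmod{\alpha}$, and writing $(t-1-s_0)a_n=q\alpha+c_r$ by division with remainder, a direct simplification reduces the two inequalities respectively to $j\le q/a_n$ and $j\le q/a_n+c_r/(a_na_n')$. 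Since $c_r\le\alpha-1<a_n'$, the perturbation $c_r/(a_na_n')$ is strictly less than $1/a_n$, insufficient to change the floor of $q/a_n$; hence both inequalities have the same integer solutions $j$, yielding $\Delta(x_r)=\Delta'(x_r')$.

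The main obstacle is recognizing that the natural pairing $x_r\leftrightarrow x_r'$ really is the right correspondence to use, despite the fact that the grid transformation shifts the $s$th blue border down by $s$ rows, a shift that varies with $s$ and does not obviously align with the fixed $(t-1)\alpha$-row translation between the two critical strips. Once this pairing is set up and the invariance of greening/reddening status under the transformation is observed, the rest is routine floor arithmetic hinging on the estimate $c_r/(a_na_n')<1/a_n$.
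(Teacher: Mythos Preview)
Your proof is correct and follows the same overall strategy as the paper's: both arguments use the description $\Delta(x)=\#(\text{greening nonstrictly above})-\#(\text{reddening strictly below})$ within a column, together with the observation that the greening/reddening status of the box indexed by $s$ depends only on whether $s\in H$ and is therefore unchanged by the transformation. The paper's proof is a single sentence asserting that ``each blue border remains in its original column, and the relative ordering of the blue borders remains the same,'' and concludes directly from that. What you have done is supply the step the paper leaves implicit: you verify, via the explicit parametrization $s=s_0+j\alpha$ and the identity $(t-1-s_0)a_n=q\alpha+c_r$, that the above/below relation between the blue box $sa_n$ and the critical-strip cell $x_r$ really does coincide with that between $sa_n'$ and $x_r'$. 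Your estimate $c_r<a_n'$, together with the fact that the fractional part of $q/a_n$ is at most $(a_n-1)/a_n$, is exactly what is needed to show the two floor conditions agree. So this is not a different route but rather a rigorous fleshing-out of the paper's terse argument.
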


\begin{proof}
Each blue border remains in its original column, and the relative ordering of the blue borders remains the same, so for any given cell in the critical strip, the quantity 
\[\#(\text{greening border nonstrictly above})-\#(\text{reddening border strictly below})\]
remains the same after the transformation from the $\Delta$ grid to the $\Delta'$ grid.
\end{proof}

In particular, this means that the first occurrence of the global minimum of the $\tau$-sequence remains in the same position relative to the critical strip, so
\[\min_{x\ge 0}\tau_{Y'}(x)-\min_{x\ge 0}\tau_{Y}(x)=\sum_{x=0}^{(t-1)a_n'}\Delta'(x) - \sum_{x=0}^{(t-1)a_n}\Delta(x),\]
since $(t-1)a_n'$ and $(t-1)a_n$ are the endpoints of the critical strips of $Y'$ and $Y$, respectively. Denote the interval $[0,(t-1)a_n]$ in the grid of $\Delta$ as $\mathcal{R}$, and similarly define $\mathcal{R}'$ for $\Delta'$ as the interval $[0,(t-1)a_n']$.

Note that $\displaystyle \sum_{x=0}^{(t-1)a_n}\Delta(x)$ equals
\begin{align*}
\sum_{x=0}^{(t-1)a_n}&\#(\text{greening borders nonstrictly above }x)-\#(\text{reddening borders strictly below }x)\\
    &=\sum_{\text{greening borders }g}\#(\text{cells nonstrictly below }g\text{ in }\mathcal{R})\\
    &-\sum_{\text{reddening borders }r}\#(\text{cells strictly above }r\text{ in }\mathcal{R}),
\end{align*}
with a similar statement holding for $\displaystyle\sum\limits_{x=0}^{(t-1)a_n'}\Delta'(x)$. Therefore, in order to compute the quantity 
\[\displaystyle\sum\limits_{x=0}^{(t-1)a_n'}\Delta'(x) - \displaystyle\sum\limits_{x=0}^{(t-1)a_n}\Delta(x),\] 
for each greening border $sa_n$ we consider the change in its ``contribution" to these sums before and after the transformation:
\begin{align*}
\delta(sa_n)&:=\#(\text{cells in }\mathcal{R}'\text{ nonstrictly below }sa_n')\\
&-\#(\text{cells in }\mathcal{R}\text{ nonstrictly below }sa_n).
\end{align*}
Similarly, for reddening borders $sa_n$ we define
\begin{align*}
\delta(sa_n)&:=\#(\text{cells in }\mathcal{R}'\text{ strictly above }sa_n')\\
&-\#(\text{cells in }\mathcal{R}\text{ strictly above }sa_n).
\end{align*}
Therefore,
\[\displaystyle\sum\limits_{x=0}^{(t-1)a_n'}\Delta'(x) - \displaystyle\sum\limits_{x=0}^{(t-1)a_n}\Delta(x)=\sum_{sa_n\text{ is greening}}\delta(sa_n)-\sum_{sa_n\text{ is reddening}}\delta(sa_n).\]
We compute these two sums separately:
\begin{itemize}
    \item Let there be $\ell$ greening borders in the $\mathcal{R}$ (hence there are $t-\ell$ reddening borders in the $\mathcal{R}$, since there are $t$ total blue borders in the $\mathcal{R}$). For each of these greening borders $sa_n$, under the transformation it moves down by $s$ cells, while the end of the critical strip moves down by $t-1$ cells. Hence $\delta(sa_n)=(t-1)-s$, so
    \[\sum_{sa_n\text{ is greening}}\delta(sa_n)=(t-1)\ell-\sum_{sa_n\text{ in }\mathcal{R}\text{ is greening}}s.\]
    \item As proven in Lemma~\ref{chi of zero}, there are exactly $t$ nonnegative reddening boxes, and we noted above that $t-\ell$ of them are in $\mathcal{R}$. For any reddening box below $\mathcal{R}$ (there are $\ell$ of these), its contribution changes under the transformation by $t-1$ due to the critical strip moving downwards by $t-1$ cells. For any reddening box $sa_n$ inside $\mathcal{R}$, it moves down by $s$ cells, and hence its contribution changes by $\delta(sa_n)=s$. So
    \[\sum_{sa_n\text{ is reddening}}\delta(sa_n)=(t-1)\ell + \sum_{sa_n\text{ in }\mathcal{R}\text{ is reddening}}s\]
\end{itemize}Subtracting these two quantities,  we obtain
\[\sum_{x=0}^{(t-1)a_n'}\Delta'(x) - \sum_{x=0}^{(t-1)a_n}\Delta(x) = -\sum_{\text{all }sa_n\text{ in }\mathcal{R}}s=-\frac{t(t-1)}{2}.\]
Substituting in our definition of $t$, we obtain exactly the condition that Lemma~\ref{messy d formula} requires in order to prove Theorem~\ref{three}.
\end{proof}

\section{Results on Maximal Monotone Subroots}\label{results maximal monotone subroots}



In this section, we define the \textit{maximal monotone subroot} of a graded root lattice homology, which was introduced recently in~\cite{dai2019involutive}. This maximal monotone subroot is a \textit{monotone graded root} which captures the general major structure of the graded root lattice homology.

\begin{figure}
    \centering
    \includegraphics[width=14cm]{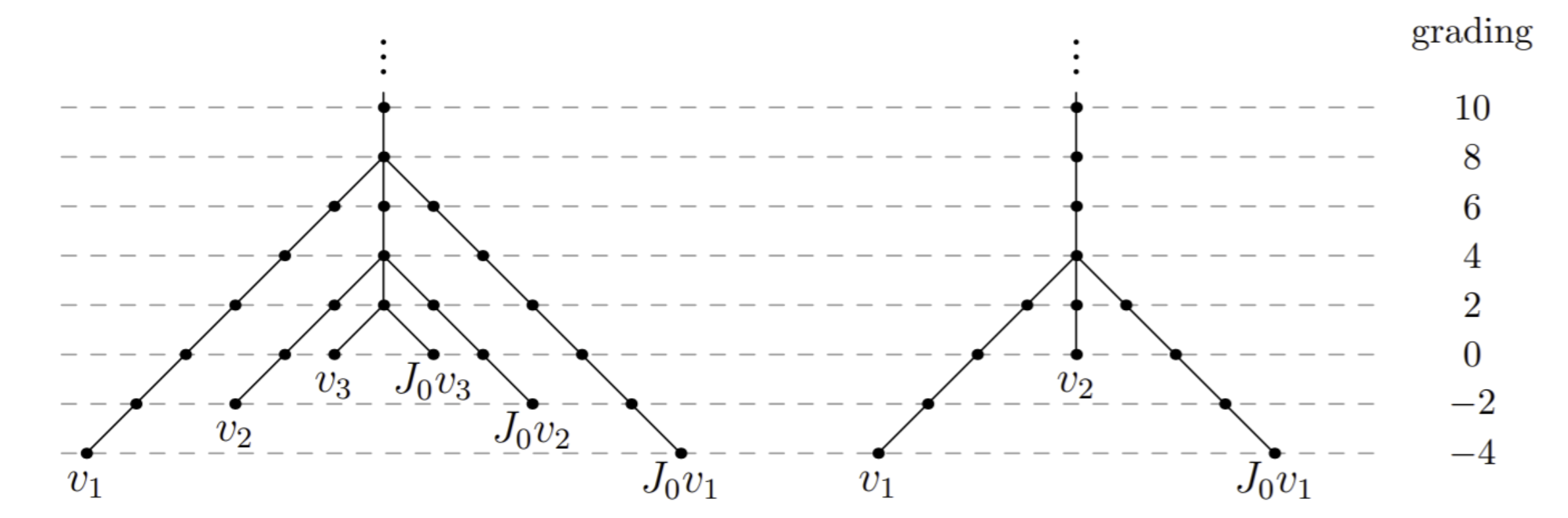}
    \caption{Two examples of monotone subroots. The left is the subroot $M(-4,8;-2,4;0,2)$ and the right is the subroot $M(-4,4;0,0)$.}
    \label{fig:monotone examples}
\end{figure}


First, we describe what a general monotone graded root is. 

\begin{definition}
Take a positive integer $n$ and two sequences of rational numbers $h_1<h_2<\cdots<h_n$ and $r_1>r_2>\cdots>r_n$ such that all the $h_i$ differ from each other by even integers, all the $r_i$ differ from each other by even integers, and $h_n \le r_n$. We construct the \textit{monotone graded root} $M=M(h_1,r_1;\ldots;h_n,r_n)$ as follows: 
\begin{enumerate}
    \item Form the stem by drawing an infinite tower upwards from a vertex at height $r_n$. This is said to be at \textit{grading} $r_n$. 
    \item For each $1 \leq i < n$, draw a pair of vertices $v_i$ and $J_0v_i$ at grading $h_i$, where $J_0v_i$ is the vertex $v_i$ reflected over the vertical axis of the graded root. Connect it to the stem at grading $r_i$.
\end{enumerate} 
(Note: if $h_n=r_n$, then $v_n=J_0v_n$ at grading $r_n$ in the second step). 
\end{definition}
Two examples are shown in Figure~\ref{fig:monotone examples}. 

To define the maximal monotone subroot of a lattice homology graded root, we first define some auxiliary definitions:

\begin{definition}
For any node $v$ in a graded root, we define the infinite tower stretching up from $v$ as $\gamma_v$. 
\end{definition}

\begin{definition}
For a given vertex $v$, the first point at which $\gamma_v$ meets the stem is the \textit{base} of $v$, denoted $b(v)$.
\end{definition}

\begin{definition}
The \textit{cluster} $C_b$ based at some grading $b$ is the set of all vertices with base $b$.
\end{definition}

\begin{definition}
The \textit{tip} of a cluster $C_b$ is the pair of vertices in $C_b$ with minimal grading. If there is more than one pair, any one can be arbitrarily selected, and if $|C_b|=1$, the tip is the singular vertex in $C_b$.
\end{definition}

\begin{definition}
Given a lattice homology graded root, we construct its \textit{maximal monotone subroot} as follows:
\begin{enumerate}
    \item Begin at the vertex on the stem of the graded root with the smallest grading. Call this grading $r$, and add the tips of $C_r$ to the subroot in the same fashion as they appear in the lattice homology graded root. 
    \item Move to the vertex on the stem with the next smallest grading (say vertex $s$), and add the tips of $C_s$ to the subroot in the same fashion as they appear in the lattice homology graded root if and only if the tips have strictly smaller grading than any tips previously added.
    \item Continue this process until all clusters considered are trivial (have size 1). This must happen since the number of clusters in any lattice homology graded root is finite.
\end{enumerate}
\end{definition}

An example of a graded root and its maximal monotone subroot is shown in Figure~\ref{fig:max subroot}.

\begin{figure}
    \centering
    \includegraphics[width=12cm]{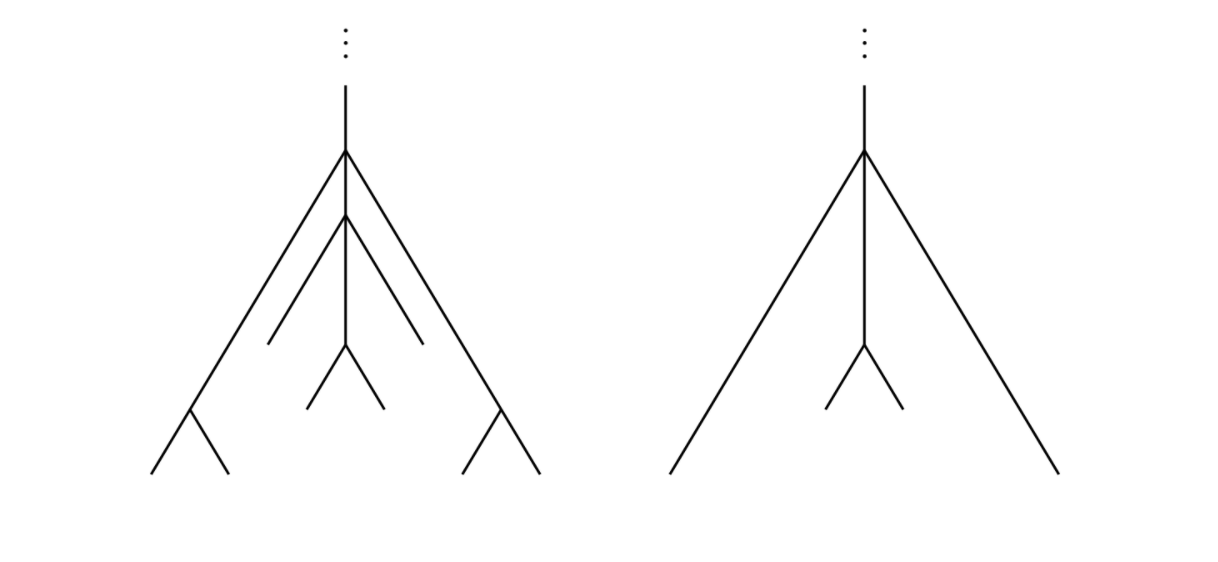}
    \caption{A graded root on the left and its maximal monotone subroot on the right.}
    \label{fig:max subroot}
\end{figure}

Maximal monotone subroots have high importance in relation to the graded roots of the lattice homologies of 3-manifolds, such as in the study of homology cobordism. First, we state the following helpful lemma, which is evident from the definition of the $\tau$-sequence and maximal monotone subroot.

\begin{lemma}\label{maximal monotone subroot global mins}
For any pair of symmetric global minima $\tau(m)$ and $\tau(N_0-m)$ of the $\tau$-sequence, the maximal monotone subroot is fully determined by the values of the $\Delta$-function in the interval $[m,N_0-m]$, up to a shift in grading.
\end{lemma}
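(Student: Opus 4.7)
The plan is to show that the greedy construction of the maximal monotone subroot, beginning at the lowest stem vertex and ascending, visits only the portion of the graded root arising from $\tau$-extrema at positions in $[m, N_0 - m]$ and terminates once it reaches the least common ancestor (LCA) of the two global-minima leaves. By the palindromic symmetry of $\tau$, which follows from $\Delta(x) = -\Delta(N_0 - x)$, every vertex of the graded root lying on the axis of symmetry is either the palindromic center itself or the LCA of some symmetric pair of leaves $(i, N_0 - i)$, and its grading equals $2\max_{x \in [i, N_0 - i]}\tau(x)$. The stem is exactly the union of these axis vertices together with the infinite tower above them.

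I would then track what the greedy rule does as the ascent proceeds: each newly-visited axis vertex corresponds to a symmetric pair $(i, N_0 - i)$ with $i$ moving from the center outward, and the cluster attached there has its tips at precisely the two leaves for $i$ and $N_0 - i$, sitting at grading $2\tau(i)$. Thus the subroot records the running minima of $\tau(i)$ as $i$ moves outward. Since $\tau(m)$ is the global minimum, the deepest tip ever added sits at grading $2\tau(m)$ and is attached at the stem vertex of grading $2M^*$, where $M^* = \max_{x \in [m, N_0 - m]}\tau(x)$; this is the LCA of the leaves at $m$ and $N_0 - m$. Any axis vertex above this LCA is the LCA of a pair $(i, N_0 - i)$ with $i < m$, so its newly-attached leaves lie at positions outside $[m, N_0 - m]$ and have $\tau$-values strictly greater than $\tau(m)$; the greedy rule rejects them and the construction halts.

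Consequently, every pair $(h_j, r_j)$ appearing in the maximal monotone subroot is of the form $(2\tau(i), 2\max_{x \in [i, N_0 - i]}\tau(x))$ for some $i \in [m, N_0 - m]$, a quantity depending only on the restriction $\tau|_{[m, N_0 - m]}$. Writing $\tau(x) - \tau(m) = \sum_{j = m}^{x - 1}\Delta(j)$, this data is fully determined by $\Delta|_{[m, N_0 - m]}$ up to the single additive constant $\tau(m)$, which doubles to the promised grading shift. The main technical obstacle will be handling ties carefully: when several positions share a local extremum value, the corresponding axis nodes collapse into a single cluster of larger arity, and one must check that the ``first tips of the cluster'' interpretation remains correct, and also verify that any leaf of grading exactly $2\tau(m)$ appearing further up the stem does not sneak past the strict-decrease rule.
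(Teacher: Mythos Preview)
Your argument is correct, and in fact goes well beyond what the paper provides: the paper does not prove this lemma at all, stating only that it ``is evident from the definition of the $\tau$-sequence and maximal monotone subroot.'' Your proposal is precisely the argument one would write to justify that claim, and the key mechanism you identify---that the strict-decrease rule in the greedy construction guarantees termination once the global-minimum tip $2\tau(m)$ has been recorded, and that every tip recorded before that point arises from a leaf at a position in $[m,N_0-m]$ because the base function $i\mapsto 2\max_{[i,N_0-i]}\tau$ is monotone as $i$ moves outward---is exactly right.

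One small imprecision worth tightening: when you write that ``the cluster attached there has its tips at precisely the two leaves for $i$ and $N_0-i$,'' note that a single cluster $C_b$ can contain several symmetric pairs of leaves (all those $i$ with $2\max_{[i,N_0-i]}\tau = b$), and the tips are the pair of minimal grading among them. This does not affect your conclusion, since all such $i$ still lie in $(m,N_0-m)$ whenever $b<b(m)$, and at $b=b(m)$ the tip grading is $2\tau(m)$ regardless of which pair is selected. Your closing remarks about ties already anticipate this.
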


As a corollary of the above, the values of the $\Delta$-function on any interval that contains two symmetric global minima of the $\tau$-seqeuence can fully determine the maximal monotone subroot up to a shift in grading.

\begin{theorem}\label{monotone root result}
The maximal monotone subroots of the lattice homologies of the Seifert homology spheres $Y=\Sigma(a_1,a_2,\dots,a_n)$ and $Y''=\Sigma(a_1,a_2,\dots,a_n+2\alpha)$ are the same.
\end{theorem}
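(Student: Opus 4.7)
The plan is to extend the grid-picture analysis from Section \ref{properties of tau} and the argument in the proof of Theorem \ref{three}, tracking the full local-extrema structure of $\tau$ in a neighborhood of the global minimum rather than only the minimum value itself. The transformation $a_n \mapsto a_n + 2\alpha$ taking $Y$ to $Y''$ shifts the $s$-th blue border $sa_n$ in the grid down by exactly $2s$ rows; this evenness is expected to be the structural reason that the transformation preserves the maximal monotone subroot, whereas the single shift $a_n \mapsto a_n + \alpha$ need not (since each shift is only $s$ rows, and the resulting graded-root structure can legitimately change).

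By the same argument as in the proof of Theorem \ref{three} --- for every cell in the critical strip, the count of greening borders non-strictly above minus reddening borders strictly below (within its column) is unchanged --- we obtain that $\Delta_Y$ and $\Delta_{Y''}$ agree on their respective critical strips, and by the $\Delta(x)=-\Delta(N_0-x)$ symmetry also on the mirror images under $x \mapsto N_0 - x$. By Theorem \ref{glob min crit strip}, the first global minimum of $\tau$ lies in the critical strip, and by Lemma \ref{maximal monotone subroot global mins} the maximal monotone subroot is determined by the $\Delta$-function on the interval between two symmetric global minima. Choosing $m$ to be this first global minimum of $\tau_Y$ and $m''$ analogously for $\tau_{Y''}$, the two maximal monotone subroots are determined respectively by $\Delta_Y|_{[m, N_0 - m]}$ and $\Delta_{Y''}|_{[m'', N_0'' - m'']}$.

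The main technical step --- and the main obstacle --- is showing that these two restrictions induce the same maximal monotone subroot up to a grading shift. Within the critical strips (and their mirror images), this is immediate from the first observation. Outside, the $\Delta$-values may genuinely differ, and we must rule out the possibility that this produces strictly deeper tips in the graded root of $Y''$ than are present in that of $Y$. My plan is to establish a height-preserving correspondence between the local extrema of $\tau_Y$ and $\tau_{Y''}$ using the explicit shift quantities $\delta(sa_n) = (t-1) - s$ for greening borders and $\delta(sa_n) = s$ for reddening borders from the proof of Theorem \ref{three}, now doubled because of the $2\alpha$ shift, together with Lemma \ref{chi across picture} governing the sign-behavior of $\chi$. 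The final argument would then verify that any additional local extrema appearing outside the critical strips correspond to branches of the graded root whose tips are not strictly deeper than the tips already produced by critical-strip extrema, so that the greedy construction of the maximal monotone subroot excludes them. The evenness of the row shifts is essential for aligning the parities of the affected gradings so that the outer extrema land on already-visited grading levels rather than deeper ones.
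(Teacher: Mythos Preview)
Your proposal correctly identifies the pieces from Section~\ref{properties of tau} and Section~\ref{results d invariants} that are needed---the critical-strip matching and Lemma~\ref{maximal monotone subroot global mins}---but the ``main technical step'' is left as a plan rather than a proof. You propose to compare $\Delta_Y|_{[m,N_0-m]}$ with $\Delta_{Y''}|_{[m'',N_0''-m'']}$; these intervals have different lengths, and outside the critical strips you concede that the $\Delta$-values may genuinely differ. The assertion that any extra local extrema of $\tau_{Y''}$ produce tips no deeper than those coming from the critical strip is not substantiated, and the ``evenness of the row shifts aligns parities of gradings'' heuristic does not actually control the heights of those extrema.

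The paper avoids this difficulty entirely by choosing different intervals. For $Y$ it takes the centrally symmetric interval $((t-1)a_n-\alpha,\,ta_n)$, and for $Y''$ it takes not the analogous interval but rather $((t-1)a_n'',\,ta_n''-\alpha)$, shifted inward by one row on each side. Both are centrally symmetric, both contain a pair of symmetric global minima (for $Y''$ this uses that $\chi''\equiv 0$ just below the critical strip, so $m''+\alpha$ is again a global minimum), and crucially they have the \emph{same} length $a_n+\alpha$. The $\Delta$-values on them are then shown to be literally identical term by term via
\[
\Delta\bigl((t-1)a_n-\alpha+i\bigr)\;=\;\Delta''\bigl((t-1)a_n''-\alpha+i\bigr)\;=\;\Delta''\bigl((t-1)a_n''+i\bigr),\qquad 0<i<a_n+\alpha,
\]
where the first equality is the grid-transformation matching you already have and the second is a single application of Lemma~\ref{delta in columns}, valid because $(t-1)a_n''+i$ is never a multiple of $a_n''$ in that range. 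No comparison of ``extra'' extrema is required; Theorem~\ref{three} then fixes the grading shift. This is also where the $+2\alpha$ hypothesis actually enters: the length match $a_n''-\alpha=a_n+\alpha$ forces $a_n''=a_n+2\alpha$, so your ``evenness of row shifts'' intuition is not the operative mechanism.
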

\begin{proof}
Denote $a_n'':=a_n+2\alpha$. We will show that there is a pair of global minima $\tau_Y(m)$ and $\tau_Y(N_0-m)$ of $\tau_Y$ and a pair of global minima $\tau_{Y''}(m'')$ and $\tau_{Y''}(N_0''-m'')$ of $\tau_{Y''}$ such that the values of $\tau_Y$ on the interval $[m,N_0-m]$ and the values of $\tau_{Y''}$ on the interval $[m'',N_0''-m'']$ are identical, which finishes by Lemma~\ref{maximal monotone subroot global mins}, as these regions completely determine the maximal monotone subroots of the respective Seifert homology spheres. 

As shown in Theorem~\ref{glob min crit strip}, the first global minimum of $\tau_Y$ must occur in the critical strip $((t-1)a_n-\alpha,(t-1)a_n]$. Since the $\Delta$-function is antisymmetric under the map $x \mapsto N_0-x$ by statement 2 of Theorem~\ref{deltadesc}, we have that that $\tau$-sequence is symmetric under that map,. In particular, the last global minimum of $\tau_Y$ must occur in the region 
\[[N_0-(t-1)a_n,N_0-(t-1)a_n+\alpha)=[ta_n-\alpha, ta_n),\]
which is the image of the critical strip under this map. Now, note that the region $((t-1)a_n-\alpha, ta_n)$ contains two symmetric global minima. Furthermore, this interval corresponds to a centrally symmetric region of the graded root of $Y$ since the endpoints $(t-1)a_n-\alpha$ and $ta_n$ sum to $N_0=(2t-1)a_n-\alpha$. By Lemma~\ref{maximal monotone subroot global mins}, we can fully determine the monotone subroot (up to a shift in the grading) solely based on the $\Delta$ values in that region. 

The corresponding region in $Y''$ is $((t-1)a_n''-\alpha,ta_n'')$, but the length of this interval is not the same as the length of $((t-1)a_n-\alpha, ta_n)$. Instead, we will consider the interval
\[((t-1)a_n'', ta_n''-\alpha),\]
which is a centrally symmetric region of the graded root of $Y''$, and that the length of this interval is the same as that of $((t-1)a_n-\alpha,ta_n)$. 

By Lemma~\ref{maximal monotone subroot global mins}, it suffices to show that the sequence of $\Delta$ values within the interval $((t-1)a_n-\alpha,ta_n)$ are exactly the same as the $\Delta''$ values within the interval $((t-1)a_n'',ta_n''-\alpha)$, since these sequences fully determine the maximal monotone subroots of the lattice homologies $Y$ and $Y''$, respectively. To this end, note that for any $0< i < a_n+\alpha$, we have
\[\Delta((t-1)a_n-\alpha+i)=\Delta''((t-1)a_n''-\alpha+i)=\Delta''((t-1)a_n''+i)\]
where the first equality holds due to our discussion in Section~\ref{results d invariants} about the transformation from $\Delta$ to $\Delta'$ (and then to $\Delta''$), and the second equality follows from Lemma~\ref{delta in columns} since $(t-1)a_n''+i$ is never a multiple of $a_n''$. Thus, the maximal monotone subroots of $Y$ and $Y''$ are the same up to a shift in grading.

In addition, Theorem~\ref{three} guarantees that the $d$-invariants are the same, so the grading of the global minima of $Y$ and $Y''$ are the same. This means that, in fact, the maximal monotne subroots of $Y$ and $Y''$ are the same, as desired.
\end{proof}

\bibliographystyle{plain}
\bibliography{ref}

\end{document}